\let\oldcap\cap
\let\cap\oldcap
\newcounter{blockcount} 
\newcommand{\blockelement}[1]{%
    \stepcounter{blockcount}
    \ifnum\value{blockcount}>1
        & 0 & 0 & \cdots & 0 \\ 
    \fi
    #1 & 0 \\ 0 & #1
    \ifnum\value{blockcount}<\numexpr\ProcessListLength/2
        \\ 0 & 0 & \cdots & 0 &
    \fi
}
\newtheorem{theorem}{Theorem}[section]
\newtheorem{lemma}[theorem]{Lemma}
\title{Half-Wave Maps: Explicit Formulas for Rational Functions with Simple Poles}
\author{Gaspard Ohlmann}
\date{}
\newcommand{\Diag}[3]{%
    \begin{pmatrix}
        #1 & 0 & \cdots & 0 \\
        0 & #2 & \ddots & \vdots \\
        \vdots & \ddots & \ddots & 0 \\
        0 & \cdots & 0 & #3
    \end{pmatrix}
}
\begin{document}

\newcommand{\BB}{\mathcal{B}}
\newcommand{\LL}{\mathcal{L}}
\newcommand{\II}{I_{2\times 2}}
\newcommand*{\dd}{\mathop{}\!\mathrm{d}}
\newcommand{\iu}{{i\mkern1mu}}
\newcommand{\im}{{\operatorname{Im}}}

\maketitle

\begin{abstract}
    We establish an explicit formula for the Half-Wave maps equation for rational functions with simple poles. The Lax pair provides a description of the evolution of the poles. By considering a half-spin formulation, we use linear algebra to derive a time evolution equation followed by the half-spins, in the moving frame provided by the Lax pair. We then rewrite this formula using a Toeplitz operator and $G$, the adjoint of the operator of multiplication by $x$ on the Hardy space $L_+^2(\mathbb{R})$.
\end{abstract}


\tableofcontents{}

\section{Introduction}

\subsection{The Half-Wave Maps equation}

The Half-Wave Maps (HWM) equation is a nonlinear partial differential equation that models the evolution of wave functions mapping into the sphere $\mathbb{S}^2$. It is given by 
\begin{equation}\tag{HWM}\label{HWM}
\partial_t m(t,x) = m(t,x) \times |\nabla| m(t,x), 
\end{equation}
where $m: \mathbb{R} \times \mathbb{R} \to \mathbb{S}^2$ represents the spin field, and $|\nabla|$ denotes the (pseudo-differential) operator corresponding to the multiplication on the Fourier side $\widehat{|\nabla| f}(\xi) = |\xi| \hat{f}(\xi)$.

Equation \eqref{HWM} formally preserves the energy  

\begin{equation*}
E[m]=\frac{1}{2} \int_{\mathbb{R}} m \cdot|\nabla| m d x=c \iint_{\mathbb{R} \times \mathbb{R}} \frac{|m(x)-m(y)|^2}{|x-y|^2} d x d y,
\end{equation*}
and we consider here functions in the corresponding energy space, i.e. $m(t,\cdot) \in \dot H^{1/2}(\mathbb{R})$, or equivalently $|\xi|^{1/2} \hat m(t,\xi) \in L_\xi^2 (\mathbb{R})$.

The Half-wave maps equation, both in the general and rational case, has recently caught interest due to its rich mathematical structure and connections to integrable systems. For a recent survey on the (HWM) equation, we refer the reader to \cite{lenzmann2018short}. Notably, it arises as a continuum limit of a classical version of the Haldane-Shastry spin chain, as shown by Zhou and Sone in \cite{zhou2015solitons}, and arises as an effective equation in the continuum limit of completely integrable Calogero-Moser classical spin systems with inverse square $1/r^2$, as demonstrated by Lenzmann and Sok \cite{lenzmann2020derivation}. 
Moreover, it emerges as a limit case of a spin generalization of the Benjamin-Ono equation (sBO), as shown in \cite{berntson2022spin}.

It is conjectured that a formula similar to the one found by P. Gerard in \cite{gerard2023explicit} for the Benjamin-Ono equation exists for this equation. In this article, we show the existence of such a formula in the rational case with simple poles. This formula facilitates the analysis of various problems, including the global-in-time existence in the rational case with simple poles or the description of asymptotic behaviors.

In higher dimensions, J. Krieger and Y. Sire show in \cite{krieger2017small} that the Half-Wave Maps equation is well posed for $d\geq 5$ for small initial data. This  result has been improved in \cite{kiesenhofer2021small} by J. Krieger and A. Kiesenhofer to $d \geq 4$. The methods used rely on Strichartz estimates that do not hold in spatial dimension one, as the problem is energy critical for $d=1$. 
 It has been shown in \cite{zhou2015solitons} and \cite{gerard2018lax} that the Half-Wave maps equation exhibits an infinite number of preserved quantities, such as mass, energy, and momentum. In \cite{lenzmann2018energy}, E. Lenzmann and A. Schikorra completely classify all the traveling waves. They can be described by the choice of their poles, a Blaschke product, and a rotation. Finally, the inverse scattering transform (IST) method has been used to the local nonlinear evolution equations such as the (KdV) and (NLS) equations, but its success is reduced for the (HWM) equation for which the Lax pair takes a nonlocal form. See \cite{22,23,24,25} for the (BO) and nonlocal (NLS) equation.

We introduce rational solutions, i.e. functions $m$ satisfying equation \eqref{HWM} that can be put in the form
\begin{equation}\label{PA}
    m(t,x) = m_0 + \sum_{j=1}^N \frac{s_j(t)}{x-x_j(t)} + \sum_{j=1}^N \frac{\bar s_j(t)}{x-\bar x_j(t)},~\im(x_j)>0
\end{equation}
where $s_j(t)$, the spins, are complex vectors in $\mathbb{C}^3$ and $x_j(t)$, the poles, are elements of $\mathbb{C}$. In \cite{berntson2020multi}, the authors show that \eqref{HWM} is satisfied if and only if the poles and spins satisfy the time evolution equations
\begin{equation*}
    \dot s_j(t) = 2 i \sum_{j\neq k}^N \frac{s_j(t) \times s_k(t)}{(x_j(t)-x_k(t))^2}, \quad j=1,2, \ldots, N,
\end{equation*}
\begin{equation*}
    \ddot x_j(t) = -4 \sum_{j \neq k} \frac{s_j(t) \cdot s_k(t)}{(x_j(t)-x_k(t))^3}, \quad j=1,2, \ldots, N,
\end{equation*}
and satisfy the the additional constraints

$$
\dot{x}_j(0)=\frac{\mathbf{s}_{j, 0} \times \mathbf{s}_{j, 0}^*}{\mathbf{s}_{j, 0} \cdot \mathbf{s}_{j, 0}^*} \cdot\left(\mathrm{im}_0+i \sum_{k \neq j}^N \frac{\mathbf{s}_{k, 0}}{x_{j, 0}-x_{k, 0}}+i \sum_{k=1}^N \frac{\mathbf{s}_{k, 0}^*}{x_{j, 0}-x_{k, 0}^*}\right), \quad j=1,2, \ldots, N ,
$$

$$
\mathbf{s}_{j, 0}^2=0, \quad \mathbf{s}_{j, 0} \cdot\left(\mathrm{im}_0+i\sum_{k \neq j}^N \frac{\mathbf{s}_{k, 0}}{x_{j, 0}-x_{k, 0}}+i\sum_{k=1}^N \frac{\mathbf{s}_{k, 0}^*}{x_{j, 0}-x_{k, 0}^*}\right)=0, \quad j=1,2, \ldots, N .
$$
The solution is thus characterized by the spins $s_j$ and the poles $x_j$ that evolve according to the dynamics of an exactly solvable spin CM system, studied for instance in \cite{calo1}, \cite{calo2} or \cite{calo3}, with constraints.

In \cite{ohlmann2024studywellposedness1denergycritical}, the author establishes in the rational case with simple poles, the well-posedness of the equation for finite and infinite times in several cases, for instance assuming that the spins stay bounded. The explicit formula allows for more control over the spins. Using the formula, we are currently working on removing the assumptions for the well-posedness as well as providing additional properties on the long-term behavior. 

We now present the works establishing a Lax pair for (HWM) that we will use extensively. In \cite{matsuno2022integrability}, Matsuno defines the following two matrices

\begin{equation*}
    L_{i,j}(t) = \left\{
    \begin{aligned}
    & \dot x_i(t),~ i=j,\\
    &\varepsilon_{i,j} \frac{\sqrt{ - 2 s_i(t)\cdot s_j(t)}}{(x_i(t) - x_j(t))},~ i\neq j,
    \end{aligned}
    \right.
    \quad
    B_{i,j}(t) = \left\{
    \begin{aligned}
    & 0,~ i=j,\\
    &\varepsilon_{i,j} \frac{\sqrt{ - 2 s_i(t)\cdot s_j(t)}}{(x_i(t) - x_j(t))^2},~ i\neq j.
    \end{aligned}
    \right.
\end{equation*}
He then shows that the matrix $L$ satisfies the key Lax equation 

\begin{equation}\label{LaxMatsuno}
    \partial_t L(t) = [B(t),L(t)] = B(t) L(t) - L(t) B(t).
\end{equation}
In particular, he obtains the corollaries, with $X(t)$ being the diagonal $(x_1(t),\dots,x_N(t))$ 

\begin{equation*}
    \dot X(t) = L(t) + [B(t),X(t)],
\end{equation*}
and with $U(t)$ being the solution of 

\begin{equation*}
    \left\{
        \begin{aligned}
            &U(0) = I_N,\\
            &\dot U(t) = B(t) U(t),
        \end{aligned}
    \right.
\end{equation*}
then the matrices $L(t)$, $S(t)$ and $X(t)$ evolve according to the time evolution equations

\begin{equation*}
    \left\{
        \begin{aligned}
            &L(t) = U(t) L(0) U(t)^{-1},\\
            &S(t) = U(t) S(0) U(t)^{-1},\\
            &X(t) = U(t) \left( X(0) + t L(0) \right) U(t)^{-1}.
        \end{aligned}
    \right.
\end{equation*}

Hence, in the moving base $U(t)$, the evolution of the poles can be described easily. This works follows the work of Lenzmann and Gerard \cite{gerard2018lax} in which a more general Lax-pair formulation is established for this equation. We introduce the $2\times 2$ matrix formulation, an equivalent representation of (HWM) considered in \cite{gerard2018lax}, inspired by the study of the classical Heisenberg model in \cite{takhtajan1977integration}. With $\sigma_1, \sigma_2, \sigma_3 \in \mathfrak{s u}(2)$ the standard Pauli matrices, and $s\in \mathbb{C}^3$ a complex vector representing a spin, we associate the matrix 

\begin{equation*}
A= s \cdot \boldsymbol{\sigma}=\sum_{j=1}^3 s_j \sigma_j=\left(\begin{array}{cc}
s_3 & s_1-\mathrm{i} s_2 \\
s_1+\mathrm{i} s_2 & -s_3
\end{array}\right) .
\end{equation*}
With $A_j(t) = s_j(t) \cdot \boldsymbol{\sigma}$, $M_0 = m_0 \cdot \boldsymbol{\sigma}$, and $M(t,x) = m(t,x) \cdot \boldsymbol{\sigma}$, then by linearity, 
\begin{equation}\label{Mmat}
    M(t,x) = M_0 + \sum_{j=1}^N \frac{A_j(t)}{x-x_j(t)} + \sum_{j=1}^N \frac{A_j^*(t)}{x-\bar x_j(t)}.
\end{equation}
For this new representation, the (HWM) equation now reads

\begin{equation*}
    \partial_t M(t,x) = -\frac{i}{2} [M(t,x), |\nabla| M(t,x)],
\end{equation*}
where we used the identity $[s \cdot \boldsymbol{\sigma},t \cdot \boldsymbol{\sigma}] = 2 i (s \wedge v) \cdot \boldsymbol{\sigma} $. Using the same identity, the time evolution for the spins, now represented by $2\times 2$ matrices, is given by 
\begin{equation*}
    \partial_t A_j(t) = \sum_{k \neq j} \frac{[A_j(t),A_k(t)]}{(x_j(t)-x_k(t))^2}.
\end{equation*}
Finally, using that for $X,Y \in \mathbb{R}^3$, 

\begin{equation*}
(X \cdot \boldsymbol{\sigma})(Y \cdot \boldsymbol{\sigma})=(X \cdot Y) I_{2\times 2}+\mathrm{i}(X \wedge Y) \cdot \boldsymbol{\sigma},
\end{equation*}
the constraints now take the form

\begin{equation}\label{consmat}
\left\{
    \begin{aligned}
        &U_{\infty}^2=1,~U_\infty^* = U_\infty,~Tr(U_\infty)=0,\\ 
        &A_j^2=0,~B_j A_j + A_j B_j =0,
    \end{aligned}
    \right.
\end{equation}
with
\begin{equation*}
    B_j= U_\infty + \frac{A_j^*}{x_j - \bar x_j} + \sum_{k\neq j} \left( \frac{A_k^*}{x_j - \bar x_k} + \frac{A_k}{ x_j -  x_k} \right).
\end{equation*}
In \cite{gerard2018lax}, Lenzmann and Gérard established the following Lax pair:

\begin{equation}\label{LaxEnno}
    \partial_t \mathcal{L}_S = [\mathcal{B}_S,\mathcal{L}_S],
\end{equation}
where $\mu_S$ is the multiplication by $S$ operator, $\mathcal{L}_S= [H,\mu_S]$ and $\mathcal{B}_S = -\frac{i}{2} (\mu_S |\nabla| +|\nabla| \mu_S) + \frac{i}{2} \mu_{|\nabla| S}$.

It is noteworthy that expressing \eqref{LaxEnno} in a soliton basis yields a relation that closely resembles, yet differs from, equation \eqref{LaxMatsuno}. Additionally, the associated conserved quantities vary between the two formulations.

\subsection{Statement of the main result}

We now introduce the notations that we need to state the main theorem of this paper.

We denote by $L_{+}^2(\mathbb{R})$ the Hardy space corresponding to functions in $L^2(\mathbb{R})$ having a Fourier transform supported in the domain $\xi \geq 0$. The space $L^2(\mathbb{R})$ identifies to holomorphic functions $f$ on the upper half plane $\mathbb{C}_+ = \{ z \in \mathbb{C},~\im(z)>0 \}$ such that 

\begin{equation*}
    \sup_{y >0} \int_{\mathbb{R}} |f(x+ iy)|^2 dx < \infty.
\end{equation*}
We denote by $\Pi_+$ the orthogonal projector from $L^2(\mathbb{R})$ onto $L_+^2(\mathbb{R})$, and define the Toeplitz operators $T_{g}$, for $g \in L^\infty(\mathbb{R})$, as $T_g(f) = \Pi_+ (fg)$ for $f \in L_+^2(\mathbb{R})$. Similarly, we define $\Pi_-$ as the projection from $L^2(\mathbb{R})$ onto $L_-^2(\mathbb{R})$, the space of functions in $L^2(\mathbb{R})$ having a Fourier transform supported in the domain $\xi \leq 0$. Finally, for $f \in L_2^+(\mathbb{R})\cap H^1(\mathbb{R})$, we define $G$ and $I_+$ as
\begin{equation*}
    \widehat{Gf}(\xi) = i \frac{\partial}{\partial \xi} \left[ \hat f(\xi) \right] \mathbbm{1}_{\xi>0},\quad I_+(f)= \hat f(0^+).
\end{equation*}

\begin{theorem}\label{gerardformula}
Let
\begin{equation*}
    M(t,x) = M_0 + V(t,x)
\end{equation*}
be a rational function with simple poles of the form 
\begin{equation*}
    V(t,x) = \sum_{j=1}^N \frac{A_j(t)}{x-x_j(t)} + \sum_{j=1}^N \frac{A_j^*(t)}{x-\bar x_j(t)},~\im(x_j)>0,
\end{equation*}
satisfying the Cauchy problem
\begin{equation*}
    \left\{
    \begin{aligned}
        &\partial_t M(t,x) = -\frac{i}{2} [M(t,x),|\nabla| M(t,x)],\\
        &V(0,x) = V_0(x).
    \end{aligned}
    \right.
\end{equation*}
Then, $\Pi_+ V(t,x)$ is given by (with $U_0(x) = U(0,x)$)
\begin{equation*}
    \Pi_+ V(t,x) = \frac{1}{2i \pi} \cdot I_+ \left[ (G-t T_{U_0} - x Id)^{-1} \Pi_+ V_0 \right].
\end{equation*}

\end{theorem}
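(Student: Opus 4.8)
The plan is to adapt to this $2\times 2$-matrix / Hardy-space framework the strategy P.~Gérard developed for the Benjamin--Ono equation.

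\emph{Step 1: an instantaneous Cauchy-type identity.} The first observation is that, as operators on $L_+^2(\mathbb{R})$, one has $G=\mu_x-\tfrac{1}{2i\pi}I_+(\cdot)$, where $\mu_x$ is multiplication by $x$; equivalently, $(G-x\,\mathrm{Id})h=g$ is the first-order ODE $i\partial_\xi\hat h(\xi)-x\hat h(\xi)=\hat g(\xi)$ on $\xi>0$, whose only $L_+^2$ solution for $\im(x)>0$ is $h(y)=(g(y)-g(x))/(y-x)$. Reading off $I_+(h)=\hat h(0^+)$ gives, for every sufficiently regular matrix-valued $g\in L_+^2(\mathbb{R})$ and $\im(x)>0$,
\[
g(x)=\frac{1}{2i\pi}\,I_+\!\left[(G-x\,\mathrm{Id})^{-1}g\right].
\]
Applied to $g=\Pi_+V(t,\cdot)=\sum_j A_j^*(t)/(x-\overline{x_j(t)})$, this reduces Theorem~\ref{gerardformula} to the operator identity $I_+[(G-x)^{-1}\Pi_+V(t)]=I_+[(G-tT_{U_0}-x)^{-1}\Pi_+V_0]$.

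\emph{Step 2: the moving frame.} Using the Lax pair \eqref{LaxEnno} with $S=M(t,\cdot)$, define the flow $\mathcal{U}(t)$ on $L^2(\mathbb{R},\mathbb{C}^2)$ by $\dot{\mathcal{U}}(t)=\mathcal{B}_{M(t)}\mathcal{U}(t)$, $\mathcal{U}(0)=\mathrm{Id}$. Since $\mathcal{B}_S^*=-\mathcal{B}_S$ for Hermitian $S$, the operator $\mathcal{U}(t)$ is unitary, and it is well defined on the class of rational data considered here thanks to the regularity obtained in \cite{ohlmann2024studywellposedness1denergycritical}. Because a constant matrix symbol commutes with $H$ and $S\mapsto\mathcal{L}_S$ is linear, $\mathcal{L}_{M_0}=0$, so \eqref{LaxEnno} integrates to $\mathcal{L}_{V(t)}=\mathcal{U}(t)\,\mathcal{L}_{V_0}\,\mathcal{U}(t)^*$. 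Here the ``half-spin'' description enters: the constraint $A_j^2=0$ of \eqref{consmat} means each $A_j(t)$ is rank-one nilpotent, which exhibits $\mathcal{L}_{V(t)}$ as a finite-rank operator (on $L_+^2$ it is $f\mapsto 2i\sum_j A_j(t)f(x_j(t))/(x-x_j(t))$), and this is exactly the point where Matsuno's $N\times N$ matrices and the linear-in-time law $X(t)=U(t)(X(0)+tL(0))U(t)^{-1}$ of \eqref{LaxMatsuno} are used — the poles $x_j(t)$ and the half-spins are read off as $\mathcal{U}(t)$-conjugates of the frozen data $X(0)+tL(0)$ and $L(0)$.

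\emph{Step 3: conjugation law for the position operator.} Using $[\mu_x,|\nabla|]=i(\Pi_+-\Pi_-)$ one computes the commutator $[\mu_x,\mathcal{B}_{M(t)}]=\tfrac12\{\mu_{M(t)},\Pi_+-\Pi_-\}$, an operator that is block-diagonal for $L^2=L_+^2\oplus L_-^2$ and whose $L_+^2$-block is the Toeplitz operator $T_{M(t)}$. Differentiating $\mathcal{U}(t)^*\mu_x\mathcal{U}(t)$ and compressing to $L_+^2$, one shows that on the finite-dimensional rational subspace carrying $\Pi_+V(t)$ the result is exactly the affine operator $G-tT_{U_0}-x$ — with $T_{M(t)}$ collapsing to the frozen $T_{U_0}$ through $\mathcal{L}_{V(t)}=\mathcal{U}(t)\mathcal{L}_{V_0}\mathcal{U}(t)^*$ — while the leftover off-diagonal (Hankel) and $L_-^2$-block contributions are annihilated once $I_+$ is applied to the resolvent against $\Pi_+V_0$. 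Combined with Step~1 this yields the operator identity, hence the theorem.

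\emph{Main obstacle.} The delicate point, absent in the Benjamin--Ono case, is that the Lax flow $\mathcal{U}(t)$ does \emph{not} preserve $L_+^2$ (the multiplication $\mu_M$ genuinely mixes $L_+^2$ and $L_-^2$), whereas the final formula lives entirely on $L_+^2$. Controlling the $L_-^2\!\to\!L_+^2$ pieces of $\mathcal{L}_{V(t)}$ and of $\mathcal{U}(t)^*\mu_x\mathcal{U}(t)$, proving they do not affect $I_+$ of the resolvent, and keeping all normalizations consistent (those in $G$, $I_+$, and the $\varepsilon_{ij}\sqrt{-2s_i\cdot s_j}$ in Matsuno's matrices) is where the constraints \eqref{consmat} and the explicit rational structure are essential; this is the heart of the argument.
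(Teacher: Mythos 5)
Your Step 1 is sound: the identity $g(x)=\tfrac{1}{2i\pi}I_+\bigl[(G-x\,\mathrm{Id})^{-1}g\bigr]$ for $g\in L^2_+$ and $\im(x)>0$ is correct (it is the resolvent identity from G\'erard's Benjamin--Ono paper, and the verification via $(G-x)h=(y-x)h-\tfrac{i}{2\pi}I_+(h)$ goes through). The gap is in Steps 2--3, and it is exactly the point you flag as the ``heart of the argument'' but do not supply. Two concrete problems. First, the Lax pair \eqref{LaxEnno} conserves, up to conjugation by $\mathcal{U}(t)$, the \emph{commutator} $\mathcal{L}_S=[H,\mu_S]$, i.e.\ the Hankel (off-diagonal) part of $\mu_{M(t)}$; but the operator produced by your computation of $[\mu_x,\mathcal{B}_{M(t)}]$ is the \emph{anticommutator} $\tfrac12\{\mu_{M(t)},\Pi_+-\Pi_-\}$, whose $L^2_+$-block is the Toeplitz operator $T_{M(t)}$. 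The Lax equation gives no information about the time evolution of this block-diagonal part, so the asserted ``collapse of $T_{M(t)}$ to the frozen $T_{U_0}$ through $\mathcal{L}_{V(t)}=\mathcal{U}(t)\mathcal{L}_{V_0}\mathcal{U}(t)^*$'' has no justification. (In the Benjamin--Ono case this issue does not arise because the Lax operator $D-T_u$ acts on $L^2_+$ itself and its Toeplitz part is conserved by construction.) Second, since $\mathcal{U}(t)$ does not preserve $L^2_+$, compressing $\mathcal{U}(t)^*\mu_x\mathcal{U}(t)$ to $L^2_+$ does not commute with integrating in $t$, nor with taking resolvents: the resolvent of the compression is not the compression of the resolvent unless the off-diagonal blocks are controlled. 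The claim that the Hankel and $L^2_-$-block contributions ``are annihilated once $I_+$ is applied'' is precisely the statement that would have to be proved, and no mechanism for it is given. As written, the proposal is therefore an outline whose central step is missing.

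For comparison, the paper deliberately avoids the infinite-dimensional Lax pair \eqref{LaxEnno} altogether. It first proves the finite-matrix formula of Theorem \ref{HST}: the half-spin evolution of Lemma \ref{Evospins} gives $\mathcal{E}(t)T=[U(t)]\mathcal{E}(0)T$ and $\mathcal{F}(t)T=[U(t)]\mathcal{F}(0)T$, which combines with Matsuno's pole law $X(t)=U(t)(X(0)+tL(0))U(t)^{-1}$ so that the conjugating matrices $[U(t)]$ cancel in $-T^T\mathcal{E}(t)\mathcal{H}[X(t)-xI_N]^{-1}\mathcal{F}(t)T$. Only afterwards does it identify this with the Hardy-space formula, by a direct computation in the explicit finite basis $K_j=F_j^*H/(y-\bar x_j)$, in which $G$ is represented by $[\bar X(0)]$ and $T_{U_0}$ by $-[L(0)^*]$. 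If you want to salvage your route, you would need to prove an operator identity of the form $\Pi_+\mathcal{U}(t)^*\mu_x\mathcal{U}(t)\Pi_+=G-tT_{U_0}+(\text{terms killed by }I_+(\cdot)\,\Pi_+V_0)$ on the rational subspace, which is a substantial new argument, not a routine adaptation of the Benjamin--Ono proof; alternatively, the finite-dimensional detour through Theorem \ref{HST} circumvents the difficulty entirely.
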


Note that the explicit formula we present bears resemblance to the one established for the Benjamin-Ono equation in \cite{gerard2023explicit}, which is given by

\begin{equation*}
    \Pi_+ u = \frac{1}{2i \pi } I_+ [(G-2 t L_{u_0} - z I_N)^{-1} \Pi_+ u_0].
\end{equation*}

However, the methodology underlying our proof differs substantially from that of \cite{gerard2023explicit}. We first prove an equivalent result, involving constant matrices given by the initial condition at $t=0$. To do so, we introduce the half-spins formulation and find a time evolution equation for the half-spins, that we solve.
Considering then the corresponding Lax pair, we take advantage of the structure provided for the poles that, once combined with the evolution of the half-spins, leads to a simple description of the evolution. 
This formulation inherently requires that the function is rational with simple poles, but coincide in that case with the more abstract formula given by theorem \ref{gerardformula}.
Extending the representation should permit to extend the result to the case where multiple poles are involved. However, extending the formula to encompass general solution requires additional arguments beyond the scope of the present work.

We now state the explicit formula in its matrix form, where the constant matrices $T$ and $\mathcal{H}$ will be given in the next section.

\begin{theorem}\label{HST}
Let
\begin{equation*}
    M(t,x) = M_0 + V(t,x)
\end{equation*}
be a rational function with simple poles of the form
\begin{equation*}
    V(t,x) = \sum_{j=1}^N \frac{A_j(t)}{x-x_j(t)} + \sum_{j=1}^N \frac{A_j^*(t)}{x-\bar x_j(t)},~\im(x_j)>0,
\end{equation*}
satisfying the Cauchy problem
\begin{equation*}
    \left\{
    \begin{aligned}
        &\partial_t M(t,x) = -\frac{i}{2} [M(t,x),|\nabla| M(t,x)],\\
        &V(0,x) = V_0(x).
    \end{aligned}
    \right.
\end{equation*}
Then, for 
\[
\alpha_j = \sqrt{-s_{j,1} + i s_{j,2}},~\beta_j = \sqrt{s_{j,1} + i s_{j,2}},
\]
there exists two constant matrices $T \in \mathbb{R}^{2N\times 2}$, $\mathcal{H}\in \mathbb{R}^{2N\times 2N}$, such that with the two diagonal matrices
\[
\mathcal{E},\mathcal{F} \in \mathbb{C}^{2N\times 2N},~
\mathcal{E}_{i,j} = \left\{
\begin{aligned}
&\delta_{i,j} \alpha_j,~ j\text{ odd}\\
&\delta_{i,j} \beta_j,~j\text{ even}
\end{aligned}
\right. ,\quad
\mathcal{F}_{i,j} = \left\{
\begin{aligned}
&\delta_{i,j} \beta_j,~ j\text{ odd}\\
&\delta_{i,j} (-\alpha_j),~j\text{ even}
\end{aligned}
\right.
\]
$\Pi_- V(t,x)$ is then given by

\begin{equation*}
    \Pi_{-} V(t,x) = -T^T \mathcal{E}(0) \mathcal{H} [X(0) + tL(0) - x I_N]^{-1} \mathcal{F}(0) T.
\end{equation*}

\end{theorem}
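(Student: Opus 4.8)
The plan is to reduce everything to the moving frame supplied by Matsuno's Lax pair and to track the $2\times 2$ spin matrices $A_j(t)$ via their "half-spin" decomposition. Recall that since the constraint $\mathbf s_j^2=0$ forces $A_j^2=0$, each $A_j$ has rank one and can be written as an outer product $A_j = v_j w_j^T$ (equivalently $A_j = \lvert v_j\rangle\langle \bar w_j\rvert$) of two vectors in $\mathbb C^2$; these are the half-spins, and with the parametrization $\alpha_j=\sqrt{-s_{j,1}+is_{j,2}}$, $\beta_j=\sqrt{s_{j,1}+is_{j,2}}$ one reads off explicit entries, which is exactly what the diagonal matrices $\mathcal E,\mathcal F$ are packaging (odd rows $\leftrightarrow$ first half-spin component, even rows $\leftrightarrow$ second). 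First I would establish the algebraic identities $\alpha_j\beta_j = \sqrt{s_{j,1}^2+s_{j,2}^2}$ up to sign, $\alpha_j^2 = -s_{j,1}+is_{j,2}$, $\beta_j^2 = s_{j,1}+is_{j,2}$, and use $A_j^2=0$ together with $\mathrm{Tr}\,A_j = 2s_{j,3}$ to pin down $s_{j,3}$ in terms of $\alpha_j,\beta_j$; this makes the decomposition $A_j = (\text{column built from }\alpha_j,\beta_j)(\text{row built from }\beta_j,-\alpha_j)$ completely explicit and identifies it with the block structure of $\mathcal E,\mathcal F$ acting on a fixed $2N\times 2$ matrix $T$ of "selection" vectors (the $e_{2j-1},e_{2j}$ pattern).

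Next I would derive the evolution of the half-spins. Starting from $\partial_t A_j = \sum_{k\neq j}\frac{[A_j,A_k]}{(x_j-x_k)^2}$ and writing $A_j = a_j b_j^T$, one seeks a first-order linear system $\dot a_j = \sum_k c_{jk} a_k$, $\dot b_j = -\sum_k c_{kj}^T b_k$ (or similar), consistent with the commutator structure; the off-diagonal coefficients will involve $b_j^T a_k/(x_j-x_k)^2$ and $a_j^T b_k/(x_j-x_k)^2$, and matching against $B_{i,j} = \varepsilon_{i,j}\sqrt{-2s_i\cdot s_j}/(x_i-x_j)^2$ — using $s_i\cdot s_j$ expressed through the half-spins via $(X\cdot\boldsymbol\sigma)(Y\cdot\boldsymbol\sigma)=(X\cdot Y)I+i(X\wedge Y)\cdot\boldsymbol\sigma$ and $\mathrm{Tr}(A_iA_j)=2\,s_i\cdot s_j$ — should reproduce exactly the matrix $B(t)$ of \eqref{LaxMatsuno}. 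The upshot is that, in the moving frame, $\mathcal E(t)$ and $\mathcal F(t)$ satisfy linear ODEs driven by $B(t)$ (suitably lifted from $N\times N$ to $2N\times 2N$ by the odd/even doubling), so that in the $U(t)$-frame both become \emph{constant}, equal to their $t=0$ values $\mathcal E(0),\mathcal F(0)$. Here $\mathcal H$ is the constant Gram-type matrix encoding the pairings of half-spins that appear when one expands $\Pi_- V$.

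Then I would compute $\Pi_- V(t,x)$ directly. Since $\im(x_j)>0$, the term $\sum_j A_j(t)/(x-x_j(t))$ lies in $L^2_-$ and $\sum_j A_j^*(t)/(x-\bar x_j(t))$ lies in $L^2_+$ (taking boundary values from $\mathbb C_+$), so $\Pi_- V(t,x) = \sum_j A_j(t)/(x-x_j(t))$. Writing $A_j(t)=a_j(t)b_j(t)^T$ and collecting the vectors $a_j(t)$ into the columns governed by $\mathcal E$, the $b_j(t)$ into rows governed by $\mathcal F$, and the scalar denominators $1/(x-x_j(t))$ into the diagonal resolvent $[X(t)-xI_N]^{-1}$, one gets an expression of the shape $-\,(\text{row})\,[X(t)-xI_N]^{-1}\,(\text{column})$. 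Now invoke Matsuno's corollary $X(t) = U(t)(X(0)+tL(0))U(t)^{-1}$ together with $S(t)=U(t)S(0)U(t)^{-1}$: conjugating the resolvent by $U(t)$ and absorbing the $U(t)^{\pm1}$ into the half-spin vectors (which transform the same way, by the ODE matching step) makes all $t$-dependence collapse onto $X(0)+tL(0)$, yielding
\[
\Pi_- V(t,x) = -\,T^T\mathcal E(0)\,\mathcal H\,[X(0)+tL(0)-xI_N]^{-1}\,\mathcal F(0)\,T,
\]
with $T,\mathcal H$ the claimed constant matrices.

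I expect the main obstacle to be the bookkeeping in the half-spin evolution step: verifying that the rank-one ansatz $A_j=a_jb_j^T$ is preserved by $\partial_t A_j = \sum_{k\neq j}[A_j,A_k]/(x_j-x_k)^2$ with coefficient matrices that match \emph{precisely} the Matsuno $B(t)$ (including the branch choices hidden in the $\varepsilon_{i,j}$ and in the square roots $\sqrt{-2s_i\cdot s_j}$, $\alpha_j$, $\beta_j$), so that the same unitary-like $U(t)$ conjugates poles and half-spins simultaneously. A secondary subtlety is confirming that the constraints \eqref{consmat} — in particular $B_jA_j+A_jB_j=0$ — are exactly what is needed for the $t=0$ evaluation $I_+[\cdots]$ in Theorem \ref{gerardformula} to agree with the closed matrix form here, i.e.\ that no extra boundary contributions appear when passing between the two formulations.
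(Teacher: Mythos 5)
Your proposal follows essentially the same route as the paper: the rank-one (half-spin) factorization $A_j=(\alpha_j,\beta_j)^T(\beta_j,-\alpha_j)$ is exactly the paper's canonical decomposition $A_j=E_jHF_j$, the linear ODE for the half-spin vectors driven by the same $B(t)$ that generates the Lax frame $U(t)$ is the paper's Lemma~\ref{Evospins}, and collapsing the resolvent via $X(t)=U(t)(X(0)+tL(0))U(t)^{-1}$ with the half-spins becoming constant in the moving frame is precisely the proof of Theorem~\ref{HST2} (the only small slip is $\mathrm{Tr}\,A_j=2s_{j,3}$, which should be $\mathrm{Tr}\,A_j=0$ with $s_{j,3}$ read off the diagonal entry $\alpha_j\beta_j$, and one should also note that antisymmetry of $B$ makes $U$ complex orthogonal, which is what lets the left factor $[U]^{-1}$ be transposed onto the $\mathcal{E}$ side). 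This is the paper's argument in all essentials.
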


\section{Derivation of the explicit formulas}

\subsection{Half-spin formulation and dynamics}

In this section, we introduce the Half-spins formulation. We first show that the dynamics of the Half-spins is governed by a simple equation. Later on, we provide formulas for the matrices $L$ and $B$ that constitute a Lax pair for (HWM). 

Let $M$ be a solution of the Half-Wave Maps equation in the Pauli's matrices form \eqref{Mmat}. Since $A_j^2=0$ and $Tr(A_j) = 0$, the constraints \eqref{consmat} imply the existence of $e_j,~\xi_j \in \mathbb{C}^2$ such that 
\begin{equation*}
    A_j v = e_j (v \cdot \xi_j) = e_j \xi_j^T v,~ e_j\cdot \xi_j =0.
\end{equation*}
The choice of $\xi_j$ and $e_j$ is not unique. This is equivalent to the existence of two diagonal matrices  $E_j$, $F_j$ and such that 
\begin{equation}\label{Hspin}
    A_j = E_j H F_j = \begin{pmatrix}
        e_j[1] & 0 \\
        0 & e_j[2] \\
    \end{pmatrix}
    \begin{pmatrix}
        1& 1 \\
        1 & 1 \\
    \end{pmatrix}
    \begin{pmatrix}
        \xi_j[1] & 0 \\
        0 & \xi_j[2] \\
    \end{pmatrix}, H E_j F_j H = 0.
\end{equation}

We now define the Half-Spin matrices $\mathcal{E}$ and $\mathcal{F}$ that are just diagonal matrices with $E_j$ and $F_j$ on the diagonal

\begin{equation}\label{Erond}
    \mathcal{E}(t) = \begin{pmatrix}
        E_1(t) & 0_{2\times 2} & \dots & 0_{2\times 2}\\
        0_{2\times 2} & E_2(t) & \dots & \vdots \\
        \vdots & \dots & \ddots & \vdots \\
        0_{2\times 2} & \dots & \dots & E_N(t)
    \end{pmatrix} \in (\mathbb{C}^{2\times 2})^{N\times N},
\end{equation}

\begin{equation}\label{Frond}
    \mathcal{F}(t) = \begin{pmatrix}
        F_1(t) & 0_{2\times 2} & \dots & 0_{2\times 2}\\
        0_{2\times 2} & F_2(t) & \dots & \vdots \\
        \vdots & \dots & \ddots & \vdots \\
        0_{2\times 2} & \dots & \dots & F_N(t)
    \end{pmatrix} \in (\mathbb{C}^{2\times 2})^{N\times N}.
\end{equation}
We also denote $\mathcal{E}_0 = \mathcal{E}(0)$ and $\mathcal{F}_0 = \mathcal{F}(0)$. We say that $\mathcal{E}$ and $\mathcal{F}$ are \textbf{Half-spin} matrices representing $V$ if for any $j$, equation \eqref{Hspin} is satisfied.
Finally, we define the two constant matrices, $T = \mathbb{C}^{2N\times 2}$ and $\mathcal{H}\in \mathbb{C}^{2N \times 2N}$ as 

\begin{equation}\label{constants}
    \mathcal{H}(t) = \begin{pmatrix}
        H & 0_{2 \times 2} & \dots & 0_{2 \times 2}\\
        0_{2\times 2} & H & \dots & 0_{2\times 2} \\
        \dots & \dots & \dots & \dots \\
        0_{2 \times 2} & 0_{2\times 2} & \dots & H 
    \end{pmatrix},~ T= \begin{pmatrix}
        I_{2\times 2} \\
        I_{2\times 2} \\
        \vdots \\
        I_{2\times 2}
    \end{pmatrix} = \begin{pmatrix}
        1 & 0 \\
        0 & 1 \\
        \vdots & \vdots \\
        \vdots & \vdots \\
        1 & 0 \\
        0 & 1 
    \end{pmatrix}.
\end{equation}

For a matrix $U \in \mathbb{C}^{N\times N}$, we define $[U] \in \mathbb{C}^{2N \times 2N}$, the doubled matrix, constituted of $N^2$ diagonal blocks of the form
\begin{equation}\label{double}
[U] = \begin{pmatrix}
    U_{1,1} I_{2\times 2} & U_{1,2} I_{2 \times 2} & \dots & U_{1,N} I_{2\times 2} \\
    U_{2,1} I_{2\times 2} & U_{2,2} \II & \dots & U_{2,N} \II \\
    \vdots & \dots & \ddots & \vdots \\
    U_{N,1} \II & \dots  & \dots & U_{N,N} \II
\end{pmatrix}
\end{equation}

Note that for a $2n \times 2 $ matrix, or equivalently a column vector of $2 \times 2$ matrices of the form 
\begin{equation*}
    M = (M_1,\dots,M_N)^T,
\end{equation*}
then the image of $M$ by a doubled matrix $[K] \in \mathbb{C}^{2N \times 2N}$ is a $2 \times 2$ column vector of the form
\begin{equation*}
    [K] M = (M'_1,\dots,M'_N),
\end{equation*}
with 
\begin{equation*}
    M'_j = \sum_{k=1}^N K_{j,k} M_k.
\end{equation*}
We finally introduce the \textbf{canonical half-spins}. For a rational solution $M$ of the form 
\[
    M(t,x) = M_0 + \sum_{j=1}^N \frac{A_j(t)}{x-x_j(t)} + \sum_{j=1}^N \frac{A_j^*(t)}{x-\bar x_j(t)},~ A_j(t) = s_j(t) \cdot \boldsymbol{\sigma},
\]
then, with $e_j=(\alpha_j,\beta_j)$, $\xi_j = (\beta_j,-\alpha_j)$ as in
    \[
    e_j=(\sqrt{-s_{j,1}+ \iu s_{j,2}},\sqrt{s_{j,1}+\iu s_{j,2}}),~\xi_j = (\sqrt{s_{j,1}+\iu s_{j,2}}, - \sqrt{-s_{j,1}+ \iu s_{j,2}})
    \]
the canonical half-spins are the corresponding $E_j$ and $F_j$ associated to $e_j$ and $\xi_j$ as in \eqref{Hspin}. They are indeed half-spins, as
\begin{equation}\label{Halfalp}
    E_j H F_j = \begin{pmatrix}
        \alpha_j & 0 \\
        0 & \beta_j 
    \end{pmatrix}
    \begin{pmatrix}
        1 & 1 \\
        1 & 1 \\
    \end{pmatrix}
    \begin{pmatrix}
        \beta_j & 0 \\
        0 & -\alpha_j 
    \end{pmatrix}
    = \begin{pmatrix}
        s_{j,3} & s_{j,1} - i s_{j,2} \\
        s_{j,1} + i s_{j,2} & - s_{j,3} 
    \end{pmatrix}.
\end{equation}
Now, the function $M$ and its evolution is described by its poles $x_j(t)$ and its half-spins $E_j(t)$ and $F_j(t)$. We introduce the following lemma, stating that a simple time evolution equation can be derived for $E_j$ and $F_j$.
 
\begin{lemma}\label{Evospins}
    Let $M$ be a rational function with simple poles of the form

    \[
    M(t,x) = M_0 + \sum_{j=1}^N \frac{A_j(t)}{x-x_j(t)} + \sum_{j=1}^N \frac{A_j^*(t)}{x-\bar x_j(t)},
    \]
    where $A_j(t) = s_j(t) \cdot \boldsymbol{\sigma}$ and 
    \[
    A_j(t) = E_j(t) H F_j(t),
    \]
    With 
    \[
    e_j=(\sqrt{-s_{j,1}+ \iu s_{j,2}},\sqrt{s_{j,1}+\iu s_{j,2}}),~\xi_j = (\sqrt{s_{j,1}+\iu s_{j,2}}, - \sqrt{-s_{j,1}+ \iu s_{j,2}}).
    \]
    Then, the time evolution equation describing the evolution the spins can be rewritten using the time evolution equation for the half-spins
    \[
    \dot E_j(t) = \sum_{l\neq j} \frac{E_l (\xi_j\cdot e_l)}{(x_j-x_l)^2},~ \dot F_j(t) = \sum_{l\neq j} \frac{F_l (\xi_j\cdot e_l)}{(x_j-x_l)^2}.
    \]
    Additionally, with 
    \[
    B_{j,k}=(1- \delta_{j,k}) \frac{(\xi_j \cdot e_k)}{(x_j-x_k)^2},~\dot U(t) = B(t) U(t),~U(0)=I_N,
    \]
    we have
    \[
    \mathcal{E}(t) T = [U(t)] \mathcal{E}(0) T,~\mathcal{F}(t) T = [U(t)] \mathcal{F}(0) T.
    \]
    
\end{lemma}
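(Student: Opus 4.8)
The plan is to start from the known time evolution for the matrix spins, $\partial_t A_j = \sum_{k\neq j}\frac{[A_j,A_k]}{(x_j-x_k)^2}$, and translate it into the half-spin language $A_j = E_j H F_j$ with $e_j\cdot\xi_j = 0$. First I would compute the product $A_j A_k = (E_j H F_j)(E_k H F_k)$: using $A_j v = e_j(\xi_j^T v)$ one gets $A_j A_k v = e_j \xi_j^T e_k \xi_k^T v = (\xi_j\cdot e_k)\, e_j (\xi_k^T v)$, i.e. $A_j A_k = (\xi_j\cdot e_k)\, E_j H F_k$ where I use that $e_j\xi_k^T$ is represented by $E_j H F_k$. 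Hence the commutator becomes $[A_j,A_k] = (\xi_j\cdot e_k)\, E_j H F_k - (\xi_k\cdot e_j)\, E_k H F_j$. The key structural point to verify (from the constraints and the specific canonical choice of $e_j,\xi_j$) is that the off-diagonal "cross" terms $E_j H F_k$ assemble correctly: substituting into $\partial_t(E_j H F_j) = \dot E_j H F_j + E_j H \dot F_j$ and matching the ansatz $\dot E_j = \sum_{l\neq j}\frac{(\xi_j\cdot e_l)E_l}{(x_j-x_l)^2}$, $\dot F_j = \sum_{l\neq j}\frac{(\xi_j\cdot e_l)F_l}{(x_j-x_l)^2}$ gives $\dot E_j H F_j + E_j H\dot F_j = \sum_{l\neq j}\frac{(\xi_j\cdot e_l)}{(x_j-x_l)^2}\big(E_l H F_j + E_j H F_l\big)$, which must be shown to equal $\sum_{k\neq j}\frac{[A_j,A_k]}{(x_j-x_k)^2}$.

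The reconciliation of these two expressions is the main obstacle, and it hinges on the algebraic identity $E_l H F_j + E_j H F_l = (\xi_j\cdot e_l)^{-1}\big((\xi_j\cdot e_l)E_j H F_l + (\xi_l\cdot e_j)E_l H F_j\big)$ being compatible with $[A_j,A_k]$ only after using $\xi_j\cdot e_l = \xi_l\cdot e_j$ (symmetry of the pairing for the canonical half-spins, which follows from $s_j\cdot s_l$ being symmetric together with the explicit formulas for $\alpha_j,\beta_j$) — I would check $\xi_j\cdot e_l = \beta_j\alpha_l - \alpha_j\beta_l$ and note this is antisymmetric, so in fact $[A_j,A_k] = (\xi_j\cdot e_k)(E_j H F_k + E_k H F_j)$ up to sign bookkeeping; tracking these signs carefully is where the calculation is delicate. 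One must also confirm that the proposed $\dot E_j, \dot F_j$ preserve the half-spin constraint $H E_j F_j H = 0$ (equivalently $e_j\cdot\xi_j = 0$) for all time, which follows by differentiating $e_j\cdot\xi_j$ and using the evolution equations plus the constraint at the remaining indices; alternatively this is automatic since the resulting $A_j(t)$ solves the original spin ODE which preserves $A_j^2 = 0$.

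For the second part, I would observe that the evolution $\dot E_j = \sum_{l\neq j} B_{j,l} E_l$ with $B_{j,l} = (1-\delta_{j,l})\frac{\xi_j\cdot e_l}{(x_j-x_l)^2}$ is precisely the statement that the column vector $\mathcal{E}(t)T = (E_1(t),\dots,E_N(t))^T$ satisfies $\frac{d}{dt}\big(\mathcal{E}(t)T\big) = [B(t)]\,\mathcal{E}(t)T$, using the action of doubled matrices on columns of $2\times2$ blocks described before the lemma (here the blocks $E_l$ play the role of the $M_k$, and $B_{j,l}$ is a scalar so $[B]$ acts by $B_{j,l}$ times the $2\times2$ identity on each block). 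Since $[U(t)]$ solves $\frac{d}{dt}[U(t)] = [\dot U(t)] = [B(t)U(t)] = [B(t)][U(t)]$ with $[U(0)] = I_{2N}$ — using that $U\mapsto[U]$ is an algebra homomorphism — the matrix $[U(t)]\mathcal{E}(0)T$ solves the same linear ODE with the same initial value, so by uniqueness $\mathcal{E}(t)T = [U(t)]\mathcal{E}(0)T$, and identically for $\mathcal{F}$. The only thing to be careful about here is checking that $U\mapsto[U]$ really is multiplicative, i.e. $[UV] = [U][V]$, which is a direct block computation.
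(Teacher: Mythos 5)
Your verification runs in the wrong direction, and that is where the gap lies. You check that \emph{if} $\dot E_j=\sum_{l\neq j}\frac{(\xi_j\cdot e_l)E_l}{(x_j-x_l)^2}$ and $\dot F_j=\sum_{l\neq j}\frac{(\xi_j\cdot e_l)F_l}{(x_j-x_l)^2}$, then $\partial_t(E_jHF_j)=\sum_{k\neq j}\frac{[A_j,A_k]}{(x_j-x_k)^2}$, using $[A_j,A_k]=(\xi_j\cdot e_k)\bigl(E_jHF_k+E_kHF_j\bigr)$ (which, incidentally, needs the \emph{antisymmetry} $\xi_k\cdot e_j=-\xi_j\cdot e_k$; your sentence invoking ``symmetry of the pairing'' is wrong, though you correct it a line later). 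But the lemma asserts the converse: the half-spins attached to the actual solution obey those ODEs. From $\dot E_jHF_j+E_jH\dot F_j=\sum_{k\neq j}\frac{[A_j,A_k]}{(x_j-x_k)^2}$ one cannot simply ``match the ansatz'': the factorization $A_j=E_jHF_j$ carries a gauge freedom $E_j\mapsto\lambda_j(t)E_j$, $F_j\mapsto\lambda_j(t)^{-1}F_j$ which leaves $A_j$ unchanged but shifts $\dot E_j,\dot F_j$ by $\pm(\dot\lambda_j/\lambda_j)E_j,F_j$, so the evolution of $A_j$ alone does not pin down the evolution of the two factors. The paper closes exactly this loophole: after rewriting the spin equation in the form $\mathcal{K}_2HF_j+E_jH\mathcal{K}_1=0$, it applies the separation Lemma~\ref{lemme:sep}, which exploits the rigid structure of the canonical half-spins ($E_j=\operatorname{diag}(\alpha_j,\beta_j)$, $F_j=\operatorname{diag}(\beta_j,-\alpha_j)$, so the two brackets are built from the same pair of scalars) to force both brackets to vanish. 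Your proposal contains no substitute for this step.

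Your consistency check could be turned into a proof by a different route: solve the (self-consistent, quadratic) half-spin system with the canonical data at $t=0$, show that $\tilde A_j:=\tilde E_jH\tilde F_j$ then satisfies the spin ODE with the given poles $x_j(t)$ and the same initial data, and conclude $\tilde A_j=A_j$ by uniqueness for that ODE; this transports \emph{some} half-spin representation by $[U(t)]$, which is all the later argument needs. But local existence, the uniqueness step, and the caveat that the conclusion then concerns a possibly non-canonical representation are absent from your write-up, so as it stands the first half is incomplete. The second half --- viewing $\mathcal{E}(t)T$ as a block column on which $[B(t)]$ acts blockwise, using that $U\mapsto[U]$ is multiplicative (the paper's Lemma~\ref{commute}) and ODE uniqueness to get $\mathcal{E}(t)T=[U(t)]\mathcal{E}(0)T$ --- is correct and matches the paper's direct computation.
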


\begin{proof}

We recall the time evolution equation followed by $A_j(t)$

\begin{equation}\label{spinspins}
    \partial_t A_j(t) = \sum_{k\neq j} \frac{\left[ A_j(t),A_k(t) \right]}{(x_j(t)-x_k(t))^2}.
\end{equation}
Since $\partial_t (A_j(t)) = \dot E_j H F_j + E_j H \dot F_j$, (we now ommit $t$ dependency when there is no ambiguity), \eqref{spinspins} becomes

\begin{multline}\label{n}
    \dot E_j H F_j + E_j H \dot F_j \\
    = \sum_{k\neq j}^N \frac{E_j H F_j E_k H F_k - E_k H F_k E_j H F_j}{(x_j-x_k)^2} =_{(*)} 
    \sum_{k\neq j}^N \frac{E_j H F_k (\xi_j \cdot e_k)}{(x_j - x_k)^2} - \sum_{k\neq j} \frac{E_k H F_j (\xi_k \cdot e_j)}{(x_j - x_k)^2},
\end{multline}
where $(*)$ comes from lemma \ref{concat}. \eqref{n} can be rewritten as 

\begin{equation}\label{ici}
    \left( \dot E_j + \sum_{k\neq j} \frac{E_k (\xi_k \cdot e_j)}{(x_j-x_k)^2} \right) H F_j + E_j H \left( \dot F_j - \sum_{j\neq k} \frac{F_k (\xi_j \cdot e_k)}{(x_j-x_k)^2} \right)=0.
\end{equation}
Using lemma \ref{lemme:sep} on \eqref{ici}, we deduce 

\begin{equation*}
 \mathcal{K}_1(t) =    \dot F_j - \sum_{j\neq k} \frac{F_k (\xi_j \cdot e_k)}{(x_j-x_k)^2} = \begin{pmatrix}
        \dot \beta_j & 0 \\
        0 & - \dot \alpha_j
    \end{pmatrix}
    - \sum_{j\neq k} \frac{(\xi_j \cdot e_k)}{(x_j-x_k)^2} \begin{pmatrix}
        \beta_k & 0 \\
        0 & -\alpha_k
    \end{pmatrix}
    =0,
\end{equation*}
and 

\begin{equation*}
 \mathcal{K}_2(t) =    \dot E_j + \sum_{j\neq k} \frac{E_k (\xi_k \cdot e_j)}{(x_j-x_k)^2} = \begin{pmatrix}
        \dot \alpha_j & 0 \\
        0 &  \dot \beta_j
    \end{pmatrix}
    + \sum_{j\neq k} \frac{(\xi_k \cdot e_j)}{(x_j-x_k)^2} \begin{pmatrix}
        \alpha_k & 0 \\
        0 & \beta_k
    \end{pmatrix} =0.
\end{equation*}
$\mathcal{K}_1(t) = 0$ gives 
\begin{equation}\label{sys1}
\left\{
\begin{aligned}
    &\dot \beta_j = \sum_{j\neq k} \frac{(\xi_j \cdot e_k)}{(x_j-x_k)^2} \beta_k,\\
    &\dot \alpha_j = \sum_{j\neq k} \frac{(\xi_j \cdot e_k)}{(x_j-x_k)^2} \alpha_k,
    \end{aligned}
    \right.
\end{equation}
and $\mathcal{K}_2(t) = 0$ gives 

\begin{equation}\label{eqtemps}
\left\{
\begin{aligned}
    &\dot \alpha_j = \sum_{k\neq j} \frac{-(\xi_k \cdot e_j)}{(x_j-x_k)^2}\alpha_k,\\
    &\dot \beta_j = \sum_{k\neq j} \frac{-(\xi_k \cdot e_j)}{(x_j-x_k)^2}\beta_k.
    \end{aligned}
    \right.
\end{equation}
Since $(\xi_j \cdot e_k) = - (\xi_k \cdot e_j)$, te two systems \eqref{sys1} and \eqref{eqtemps} are equivalent (and compatible). The time evolution equation \eqref{eqtemps} can be rewritten as

\begin{equation}\label{systemtemps}
    \partial_t
    \begin{pmatrix}
        \alpha_1(t) \\
        \alpha_2(t) \\
        \vdots \\
        \alpha_N(t) 
    \end{pmatrix} = 
    {B}(t) \begin{pmatrix}
        \alpha_1(t) \\
        \alpha_2(t) \\
        \vdots \\
        \alpha_N(t) 
    \end{pmatrix},~
       \partial_t
    \begin{pmatrix}
        \beta_1(t) \\
        \beta_2(t) \\
        \vdots \\
        \beta_N(t) 
    \end{pmatrix} = 
    {B}(t) \begin{pmatrix}
        \beta_1(t) \\
        \beta_2(t) \\
        \vdots \\
        \beta_N(t) 
    \end{pmatrix},
\end{equation}

where $ {B}_{i,j} = \frac{\xi_i \cdot e_j}{(x_i-x_j)^2}$ for $i\neq j$, $0$ for $i=j$. Defining $\tilde U$ as 

\begin{equation*}
    \left\{
    \begin{aligned}
        &\partial_t  U(t) =  B(t)  U(t),\\
        &{U}(0) = I_N,
    \end{aligned}
    \right.
\end{equation*}
then \eqref{systemtemps} can be integrated, yielding

\begin{equation*}
    \begin{pmatrix}
        \alpha_1(t) \\
        \alpha_2(t) \\
        \vdots \\
        \alpha_N(t) 
    \end{pmatrix} = {U}(t) \begin{pmatrix}
        \alpha_1(0) \\
        \alpha_2(0) \\
        \vdots \\
        \alpha_N(0) 
    \end{pmatrix},~
    \begin{pmatrix}
        \beta_1(t) \\
        \beta_2(t) \\
        \vdots \\
        \beta_N(t) 
    \end{pmatrix} = {U}(t) \begin{pmatrix}
        \beta_1(0) \\
        \beta_2(0) \\
        \vdots \\
        \beta_N(0) 
    \end{pmatrix}.
\end{equation*}

We now justify the computation. We define $[ U]$ the doubled matrix, meaning that the coefficients $u_{i,j}$ are replaced by $2\times 2$ matrices $u_{i,j} I_{2\times 2}$. This means 

\begin{equation*}
    \left\{
        \begin{aligned}
            &[ U]_{2i,2j} =  U_{i,j}\\
            &[ U]_{2i-1,2j-1} =  U_{i,j}\\
            &[ U]_{2i,2j-1} = 0\\
            &[ U]_{2i-1,2j} = 0.
        \end{aligned}
    \right.
\end{equation*}
Now, we compute $[ U] (E_1,\dots,E_N)^T$. We have for $i=1+2r$ odd, 

\begin{multline*}
    [ U](E_1,\dots,E_N)^T_{i,1} = \sum_{j=1}^{2N} [ U]_{i,j} (E_1,\dots,E_N)_{j,1} = \sum_{ k = 0}^{N-1} [ U]_{i,2k+1} \alpha_{k+1} \\
    =\sum_{k=0}^{N-1}  U_{r,k+1} \alpha_{k+1}.
\end{multline*}
Equivalently, $[ U](E_1,\dots,E_N)^T_{i,j}$ is the scalar product (without conjugate) between two vectors
\begin{equation*}
\left\{
\begin{aligned}
    &i = 2I-1,~v_1 = (u_{I,1},0,u_{I,2},0,\dots,u_{I,N},0)  \\
    &i = 2I,~v_1 = ( 0,u_{I,1}, 0, u_{I,2},0\dots,0 , u_{I,N} ) \\
    &j = 1,~ v_2 = (\alpha_1,0,\alpha_2,0,\dots,\alpha_N,0) \\
    &j = 2,~ v_2 = (0,\beta_1,0,\beta_2,\dots,0,\beta_N)
\end{aligned}.
\right.
\end{equation*}
So if $i$ and $j$ are not of the same parity, it vanishes. If $i$ is odd $(2I-1)$ and $j$ is 1, then the coefficient is 
\begin{equation*}
    \sum_{k=1}^N u_{I,k} \alpha_k,
\end{equation*}
if $i$ is even and $j$ is $2$ then we obtain

\begin{equation*}
    \sum_{k=1}^N u_{I,k} \beta_k.
\end{equation*}
Hence, we indeed have 

\begin{equation*}
    (E_1(t),\dots,E_N(t))^T = [ U](t)  (E_1(0),\dots,E_N(0))^T. 
\end{equation*}
Since we have 
\begin{equation} \label{form}
    \mathcal{E}(t) T = \begin{pmatrix}
        E_1(t) \\
        E_2(t) \\
        \vdots \\
        E_N(t)
    \end{pmatrix},\quad 
        \mathcal{F}(t) T = \begin{pmatrix}
        F_1(t) \\
        F_2(t) \\
        \vdots \\
        F_N(t)
    \end{pmatrix},
\end{equation}
this can be rewritten as

\begin{equation*}
\boxed{
     \mathcal{E}(t) T = [ U](t) \mathcal{E}(0) T }.
\end{equation*}
Similarly, 

\begin{equation*}
\boxed{
     \mathcal{F}(t) T = [ U](t) \mathcal{F}(0) T},
\end{equation*}
which concludes the proof. 

\end{proof}

We now state and show a second lemma, providing an expression for the matrices $L$ and $B$ involved in the Lax pair, using the half-spins.

\begin{lemma}\label{Lax}
    Let $M$ be a rational solution of (HWM) of the form
    \[
    M(t,x) = M_0+ \sum_{j=1}^N \frac{A_j(t)}{x-x_j(t)} + \sum_{j=1}^N \frac{A_j^*(t)}{x-\bar x_j(t)},
    \]
    with $A_j = E_j H F_j$ and $H_j$, $F_j$ are the canonical half-spins associated to $s_j$.
    Then, defining the matrices $L$ and $B$ as 
    \[
    L_{j,k} = \delta_{j,k} \dot x_j + (1-\delta_{jk}) \frac{\xi_j \cdot e_k}{x_j-x_k},
    \]
    \[
    B_{j,k} = (1-\delta_{j,k}) \frac{\xi_j \cdot e_k}{(x_j-x_k)^2},
    \]
    $L$ and $B$ are indeed of the form
    \begin{equation}\label{L}
    L_{i,j}(t) = \delta_{j,k} \dot x_j(t) + (1-\delta_{j,k}) \varepsilon_{j,k} \frac{\sqrt{ - 2 s_j(t)\cdot s_k(t)}}{(x_j(t) - x_k(t))}
    \end{equation}
    \begin{equation}\label{B}
    B_{j,k}=(1-\delta_{j,k})\varepsilon_{j,k} \frac{\sqrt{ - 2 s_j(t)\cdot s_k(t)}}{(x_j(t) - x_k(t))^2},~ i\neq j.
    \end{equation}

    Morever, the Half-Wave maps equation in the rational case with simple poles admits the Lax pair 
    \[
    \dot L = [B,L] = BL - LB.
    \]
    The same derivation can then be made to obtain 
    \begin{equation*}
    \left\{
        \begin{aligned}
            &L(t) = U(t) L(0) U(t)^{-1},\\
            &S(t) = U(t) S(0) U(t)^{-1},\\
            &X(t) = U(t) \left( X(0) + t L(0) \right) U(t)^{-1},
        \end{aligned}
    \right. ,~\text{ where } \dot U(t) = B(t) U(t),~U(0) = I_N.
    \end{equation*}
\end{lemma}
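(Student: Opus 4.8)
The plan is to verify the three assertions in order: first that the matrices $L$ and $B$ defined via the half-spin contractions $\xi_j\cdot e_k$ coincide (up to the sign conventions $\varepsilon_{j,k}$) with Matsuno's matrices built from $\sqrt{-2\,s_j\cdot s_k}$; then that the Lax equation $\dot L=[B,L]$ holds; and finally that the integrated forms for $L(t),S(t),X(t)$ follow by the standard conjugation argument. For the first point I would compute $\xi_j\cdot e_k$ explicitly. Using $e_k=(\alpha_k,\beta_k)$ and $\xi_j=(\beta_j,-\alpha_j)$ with $\alpha_j=\sqrt{-s_{j,1}+\iu s_{j,2}}$, $\beta_j=\sqrt{s_{j,1}+\iu s_{j,2}}$, one has $\xi_j\cdot e_k=\beta_j\alpha_k-\alpha_j\beta_k$, and squaring gives $(\xi_j\cdot e_k)^2=\beta_j^2\alpha_k^2+\alpha_j^2\beta_k^2-2\alpha_j\beta_j\alpha_k\beta_k$. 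Now $\alpha_j^2\beta_j^2=(-s_{j,1}+\iu s_{j,2})(s_{j,1}+\iu s_{j,2})=-(s_{j,1}^2+s_{j,2}^2)=-(\,-s_{j,3}^2\,)=s_{j,3}^2$ using the constraint $s_j^2=0$, so $\alpha_j\beta_j=\pm s_{j,3}$; and $\beta_j^2\alpha_k^2+\alpha_j^2\beta_k^2=(s_{j,1}+\iu s_{j,2})(-s_{k,1}+\iu s_{k,2})+(-s_{j,1}+\iu s_{j,2})(s_{k,1}+\iu s_{k,2})=-2(s_{j,1}s_{k,1}+s_{j,2}s_{k,2})$. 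Combining, $(\xi_j\cdot e_k)^2=-2(s_{j,1}s_{k,1}+s_{j,2}s_{k,2}+s_{j,3}s_{k,3})=-2\,s_j\cdot s_k$, so $\xi_j\cdot e_k=\varepsilon_{j,k}\sqrt{-2\,s_j\cdot s_k}$ for an appropriate choice of sign $\varepsilon_{j,k}$, which is exactly \eqref{L}–\eqref{B}. I should note that the off-diagonal entries of $L$ and $B$ depend only on $\xi_j\cdot e_k$, which is basis-independent among admissible half-spin decompositions, so the formulas are well-defined.

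For the Lax equation, the cleanest route is to reuse Lemma \ref{Evospins} together with the evolution equations for the poles. We already know from the excerpt (Matsuno's result, restated in Lemma \ref{Lax}) that $\dot x_j$ satisfies $\ddot x_j=-4\sum_{k\neq j}\frac{s_j\cdot s_k}{(x_j-x_k)^3}$ and $\dot s_j$ is given by the spin equation; since $L$ and $B$ here have entries that are \emph{identical functions} of the $x_j$, $\dot x_j$ and $s_j\cdot s_k$ to Matsuno's $L$ and $B$ (we just proved the off-diagonal entries agree, and the diagonal of $L$ is $\dot x_j$ in both, the diagonal of $B$ is $0$ in both), the identity $\dot L=[B,L]$ is literally \eqref{LaxMatsuno} transported through the substitution. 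So the argument is: the map $(x_j,\dot x_j,s_j)\mapsto (L,B)$ is the same as Matsuno's, the evolution equations satisfied by $(x_j,\dot x_j,s_j)$ are the same, hence the conclusion $\dot L=[B,L]$ is the same. Alternatively, if one wants a self-contained derivation, differentiate $L_{j,k}$ directly: for $j\neq k$, $\dot L_{j,k}=\frac{\partial_t(\xi_j\cdot e_k)}{x_j-x_k}-\frac{(\xi_j\cdot e_k)(\dot x_j-\dot x_k)}{(x_j-x_k)^2}$, substitute $\partial_t(\xi_j\cdot e_k)=\dot\xi_j\cdot e_k+\xi_j\cdot\dot e_k$ using the half-spin dynamics from Lemma \ref{Evospins} (which gives $\dot E_l$, $\dot F_l$, hence $\dot e_l$, $\dot\xi_l$ as linear combinations of the $e_m$, $\xi_m$ with coefficients $\frac{\xi_l\cdot e_m}{(x_l-x_m)^2}$), and match against $[B,L]_{j,k}=\sum_{m\neq j,k}\bigl(B_{j,m}L_{m,k}-L_{j,m}B_{m,k}\bigr)+B_{j,k}(L_{k,k}-L_{j,j})$; the diagonal entry requires $\ddot x_j=(BL-LB)_{j,j}=\sum_{m\neq j}(B_{j,m}L_{m,j}-L_{j,m}B_{m,j})=\sum_{m\neq j}\frac{2\,s_j\cdot s_m}{(x_j-x_m)^3}\cdot(-2)$, matching the pole equation.

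Finally, for the integrated forms, define $U(t)$ by $\dot U=BU$, $U(0)=I_N$. Then $\frac{d}{dt}\bigl(U^{-1}LU\bigr)=U^{-1}(-BL+\dot L+LB)U=U^{-1}(\dot L-[B,L])U=0$, so $L(t)=U(t)L(0)U(t)^{-1}$; the same computation gives $S(t)=U(t)S(0)U(t)^{-1}$ provided $\dot S=[B,S]$, which should be the Pauli-matrix or half-spin form of the spin evolution equation and can be checked the same way as the $L$ equation; and for $X=\mathrm{diag}(x_j)$ one uses the relation $\dot X=L+[B,X]$ (valid since $L_{j,k}-[B,X]_{j,k}=L_{j,k}-B_{j,k}(x_k-x_j)=\frac{\xi_j\cdot e_k}{x_j-x_k}+\frac{(\xi_j\cdot e_k)(x_j-x_k)}{(x_j-x_k)^2}$ for $j\neq k$ — wait, this needs the $L$ diagonal to absorb $\dot x_j$, which it does by definition), so that $\frac{d}{dt}\bigl(U^{-1}XU\bigr)=U^{-1}(\dot X-[B,X])U=U^{-1}LU=L(0)$, giving $X(t)=U(t)(X(0)+tL(0))U(t)^{-1}$. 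The main obstacle I anticipate is the bookkeeping in the off-diagonal Lax computation: correctly tracking the index ranges in $\sum_{m\neq j,k}$ versus the boundary terms $m=j$ and $m=k$, and confirming that the signs $\varepsilon_{j,k}$ (equivalently the branch choices of the square roots defining $\alpha_j,\beta_j$) are consistent across all entries so that $\xi_j\cdot e_k$ genuinely equals a single-valued antisymmetric object; a clean way to sidestep branch issues entirely is to phrase everything in terms of $\xi_j\cdot e_k$ and only invoke $\sqrt{-2\,s_j\cdot s_k}$ at the very end for the cosmetic comparison with \eqref{LaxMatsuno}.
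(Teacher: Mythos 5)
Your proposal is correct and follows essentially the same route as the paper: identify $\xi_j\cdot e_k$ with $\varepsilon_{j,k}\sqrt{-2\,s_j\cdot s_k}$, verify $\dot L=[B,L]$ entrywise by feeding the half-spin evolution of Lemma \ref{Evospins} into $\partial_t(\xi_j\cdot e_k)$, and conjugate by $U$. The only cosmetic differences are that you derive $(\xi_j\cdot e_k)^2=-2\,s_j\cdot s_k$ by direct expansion using the null constraint $s_j^2=0$ where the paper uses the trace identity $s_j\cdot s_k=\tfrac12\operatorname{Tr}(A_jA_k)$, and that you write out the conjugation argument for $L(t)$, $X(t)$ which the paper leaves implicit.
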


\begin{proof}

We start with the derivation of \eqref{L} and \eqref{B}. We will use the definition of the canonical half-spins, taking profit that $e_j = (\alpha_j,\beta_j)$ and $\xi_k = (\beta_k,-\alpha_k)$.

\begin{equation*}
    s_j \cdot s_k = \frac{1}{2} Tr\left( E_j H F_j E_k H F_k  \right)= \frac{1}{2} (e_j \cdot \xi_k) (e_k \cdot \xi_j).
\end{equation*}
Now, 

\begin{equation*}
    (e_j \cdot \xi_k) = \alpha_j \beta_k - \beta_j \alpha_k = - (e_k \cdot \xi_j),
\end{equation*}
so 

\begin{equation*}
    s_j \cdot s_k = \frac{-1}{2} (\xi_k \cdot e_j)^2.
\end{equation*}

These relations imply that $\varepsilon_{j,k} \sqrt{-2 s_j \cdot s_k } = (\xi_k \cdot e_j)$, and $\xi_j \cdot e_k = \varepsilon_{k,j} \sqrt{-2 s_j\cdot s_k}$. Additionally, one can derive that $L = [B,L]$. To prove it, We compare the expression of $\dot L$ and $[B,L]$. Computing the $(j,k)$ element yields
    \[
    \left(\Dot L\right)_{j,k} = \delta_{j,k} \ddot x_j + (1-\delta_{j,k}) \left( -\frac{\xi_j\cdot e_k}{(x_j-x_k)^2} (\dot x_j-\dot x_k) + \frac{\dot \xi_j \cdot e_k + \xi_j \cdot \dot e_k}{x_j-x_k} \right),
    \]
    and 
    \[
    [B,L]_{j,k} = -(1-\delta_{j,k}) \frac{\xi_j \cdot e_k}{(x_j-x_k)^2}(\dot x_j-\dot x_k) +  \sum_{l\neq j,k}^N \frac{(\xi_j \cdot e_l) (\xi_l \cdot e_k)}{(x_j-x_l)^2(x_k-x_l)^2} (2x_l -x_j -x_k).
    \]

    For $j=k$, we obtain the time evolution equation of $\ddot x_j$, since
    \[
    \ddot x_j = 2 \sum_{l\neq k,j}^N \frac{ (\xi_j \cdot e_l) (\xi_l \cdot e_j)}{(x_l-x_j)^3} = 4 \sum_{l\neq k,j}^N \frac{(\xi_j\cdot e_l)^2}{(x_j-x_l)^3} = 4 \sum_{l\neq k,j}^N \frac{s_j\cdot s_k}{(x_j-x_l)^3}.
    \]
    For $k \neq j$, the equality reduces to
    \[
    \frac{\dot \xi_j \cdot e_k + \xi_j \cdot \dot e_k}{(x_l-x_k) - (x_l-x_j)} = \sum_{l \neq k,j} \frac{(\xi_j \cdot e_l)(\xi_l\cdot e_k)}{(x_j-x_l)^2 (x_k-x_l)^2} ((x_l - x_j) + (x_l-x_k)),
    \]
    or 
    \[
    \dot \xi_j \cdot e_k + \xi_j \cdot \dot e_k = \sum_{l \neq k,j} \frac{(\xi_j \cdot e_l)(\xi_l\cdot e_k)}{ (x_j-x_l)^2} - \sum_{l \neq k,j} \frac{(\xi_j \cdot e_l)(\xi_l\cdot e_k)}{ (x_k-x_l)^2}.
    \]
    Using now that 
    \[
    \dot F_j = \sum_{l\neq j} \frac{F_l (\xi_j\cdot e_l)}{(x_j-x_l)^2},~\dot E_j = - \sum_{k\neq j} \frac{E_k (\xi_k\cdot e_j)}{(x_j-x_k)^2},
    \]
    we can take the trace and obtain
    \[
    Tr(\dot F_j H E_k) = \dot \xi_j \cdot e_k = \sum_{l\neq j} \frac{Tr(F_l H E_k) (\xi_j \cdot e_l)}{(x_j-x_l)^2} = \sum_{l\neq j} \frac{(\xi_l \cdot e_k)(\xi_j\cdot e_l)}{(x_j-x_l)^2},
    \]
    and in a similar fashion 
    \[
    Tr(\dot E_k H F_j) = -\sum_{l\neq k}\frac{(\xi_j \cdot e_l)(\xi_l \cdot e_k)}{(x_k-x_l)^2},
    \]
    which means $\dot L=  [B,L]$ is satisfied.

\end{proof}

\subsection{Proof of theorem \ref{HST}}

This section is devoted to the proof of one of the main theorem of this paper, that we state now. It provides an explicit formula for the solution of the Half-Wave maps equation in the rational case with simple poles using constant matrices given by the condition at $t=0$. Later on, we show that this formulation is equivalent to the formula introduced in theorem \ref{gerardformula}. 

\begin{theorem}\label{HST2}
Let
\begin{equation*}
    M(t,x) = M_0 + V(t,x)
\end{equation*}
be a rational function with simple poles of the form
\begin{equation*}
    V(t,x) = \sum_{j=1}^N \frac{A_j(t)}{x-x_j(t)} + \sum_{j=1}^N \frac{A_j^*(t)}{x-\bar x_j(t)},~\im(x_j)>0,
\end{equation*}
satisfying the Cauchy problem
\begin{equation*}
    \left\{
    \begin{aligned}
        &\partial_t M(t,x) = -\frac{i}{2} [M(t,x),|\nabla| M(t,x)],\\
        &V(0,x) = V_0(x).
    \end{aligned}
    \right.
\end{equation*}
Then, for 
\begin{equation*}
    e_j = \left( \sqrt{-s_{j,1} + i s_{j,2}}, \sqrt{s_{j,1} + i s_{j,2}} \right),~ \xi_j = \left( \sqrt{s_{j,1} + i s_{j,2} }, - \sqrt{-s_{j,1} + i s_{j,2}} \right),
\end{equation*}
and $e_j=(\alpha_j,\beta_j)$, we correspondingly define 

\begin{equation*}
    E_j = \begin{pmatrix}
        \alpha_j & 0 \\
        0 & \beta_j 
    \end{pmatrix},~ F_j = \begin{pmatrix}
        \beta_j & 0 \\
        0 & -\alpha_j 
    \end{pmatrix},
\end{equation*}

and $\mathcal{E}$ and $\mathcal{F}$ as in \eqref{Erond} and \eqref{Frond}. 
Then, $\Pi_- V(t,x)$ is given by, with $T$ and $\mathcal{H}$ defined as in \eqref{constants}, 

\begin{equation*}
    \Pi_{-} V(t,x) = -T^T \mathcal{E}(0) \mathcal{H} [X(0) + tL(0) - x I_N]^{-1} \mathcal{F}(0) T.
\end{equation*}

\end{theorem}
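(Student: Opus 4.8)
The plan is to compute $\Pi_- V(t,x)$ directly from the partial-fraction expansion of $V$ and then substitute the evolution formulas established in Lemmas \ref{Evospins} and \ref{Lax}. Since $V(t,x) = \sum_j \frac{A_j(t)}{x - x_j(t)} + \sum_j \frac{A_j^*(t)}{x - \bar x_j(t)}$ with $\im(x_j) > 0$, the terms $\frac{A_j(t)}{x - x_j(t)}$ extend holomorphically to the upper half-plane and the terms $\frac{A_j^*(t)}{x - \bar x_j(t)}$ to the lower half-plane; hence $\Pi_- V(t,x) = \sum_{j=1}^N \frac{A_j(t)^*}{x - \bar x_j(t)}$, or equivalently (taking the adjoint of the whole thing and relabeling, or working on the $\Pi_+$ side) we get a sum of simple poles in the $x_j(t)$. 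I would work with whichever of $\Pi_\pm V$ is a sum over $x_j(t)$ and $A_j(t) = E_j(t) H F_j(t)$, and note $\frac{A_j(t)}{x - x_j(t)} = E_j(t) H F_j(t)\,(x_j(t) - x)^{-1}$.

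Next I would package this sum as a matrix product. Using the column-of-blocks notation, write $\Pi_+ V(t,x) = -(E_1(t),\dots,E_N(t))\,[\,(X(t) - x I_N)^{-1}\,]\,\mathcal{H}\,(F_1(t),\dots,F_N(t))^T$, which by \eqref{form} equals $-T^T \mathcal{E}(t)\, [\,(X(t)-xI_N)^{-1}\,]\, \mathcal{H}\, \mathcal{F}(t) T$ after moving the doubled matrix and $\mathcal H$ past the block-diagonal $\mathcal E,\mathcal F$ (they commute appropriately since $\mathcal H$ and $[U]$ act block-diagonally/by the $N\times N$ index while $\mathcal E$ is block-diagonal — this is the one commutation bookkeeping step to verify carefully). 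Here $X(t) = \operatorname{diag}(x_1(t),\dots,x_N(t))$. Then I substitute: by Lemma \ref{Lax}, $X(t) = U(t)(X(0) + tL(0))U(t)^{-1}$, so $(X(t) - xI_N)^{-1} = U(t)(X(0) + tL(0) - xI_N)^{-1}U(t)^{-1}$, and therefore $[\,(X(t)-xI_N)^{-1}\,] = [U(t)]\,[\,(X(0)+tL(0)-xI_N)^{-1}\,]\,[U(t)]^{-1}$.

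The final step combines this with Lemma \ref{Evospins}: $\mathcal{E}(t) T = [U(t)] \mathcal{E}(0) T$ and $\mathcal{F}(t) T = [U(t)] \mathcal{F}(0) T$, hence $T^T \mathcal{E}(t)^{\text{(as column)}}$ pairs with $[U(t)]$ and its inverse on either side, telescoping the two copies of $[U(t)]^{\pm 1}$ away and leaving exactly $-T^T \mathcal{E}(0)\, \mathcal{H}\, [X(0) + tL(0) - xI_N]^{-1}\, \mathcal{F}(0) T$ (up to moving $\mathcal H$ through, and using that the $B$-matrix of Lemma \ref{Evospins} coincides with that of Lemma \ref{Lax}, so the same $U(t)$ governs both). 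I expect the main obstacle to be the careful tracking of the doubling operation $[\cdot]$ and the block-diagonal matrices $\mathcal E,\mathcal F,\mathcal H$: one must check that $\mathcal H$ commutes past $[U(t)]$ (true, since $[U(t)]$ acts on the "outer" $N\times N$ index with scalar $2\times2$ blocks while $\mathcal H$ is $\operatorname{diag}(H,\dots,H)$), and that the column-vector identities in \eqref{form} are applied on the correct side. Everything else is the bookkeeping of writing a sum of rank-one-type terms as a single resolvent, plus the two telescoping substitutions, which are routine once the notational framework is set up.
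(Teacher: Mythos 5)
Your proposal follows essentially the same route as the paper: encode $\sum_j A_j(t)/(x-x_j(t))$ as $-T^T\mathcal{E}(t)\mathcal{H}[X(t)-xI_N]^{-1}\mathcal{F}(t)T$, substitute $X(t)=U(t)(X(0)+tL(0))U(t)^{-1}$ from Lemma \ref{Lax} after doubling, and cancel the $[U(t)]^{\pm1}$ factors using Lemma \ref{Evospins} and the commutation of $\mathcal{H}$ with doubled matrices. The only points to tighten are the initial $\Pi_\pm$ labeling (with the paper's conventions $\Pi_- V=\sum_j A_j/(x-x_j)$, not $\sum_j A_j^*/(x-\bar x_j)$) and the cancellation on the left of the resolvent, which requires transposing and hence the complex orthogonality $U^T=U^{-1}$ (a consequence of $B^T=-B$) that the paper invokes explicitly.
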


\begin{proof}
The strategy of this proof will be to express $\Pi_{-} V(t,x)$ using our formalism. Then, combining the time evolution of the poles in the moving frame and the time evolution of the half-spins will provide the result, as those evolutions almost entirely cancel out.

    Let $M$ be a rational solution of (HWM) of the form
    \[
    M(t,x) = M_0+ \sum_{j=1}^N \frac{A_j(t)}{x-x_j(t)} + \sum_{j=1}^N \frac{A_j^*(t)}{x-\bar x_j(t)},
    \]
    with $A_j = E_j H F_j$ and $H_j$, $F_j$ are the canonical half-spins associated to $s_j$.
With $X(t) = diag(x_1,\dots,x_N)$, and 
\begin{equation}\label{LB}
    L_{i,j}(t) = \left\{
    \begin{aligned}
    & \dot x_i(t),~ i=j,\\
    &\frac{ \xi_i  \cdot e_j }{(x_i(t) - x_j(t))},~ i\neq j,
    \end{aligned}
    \right. 
    ~
    B_{i,j}(t) = \left\{
    \begin{aligned}
    & 0,~ i=j,\\
    & \frac{ \xi_i \cdot e_j }{(x_i(t) - x_j(t))^2},~ i\neq j,
    \end{aligned}
    \right.
\end{equation}
then Lemma \ref{Lax} provides in particular with $\partial_t U(t) = B(t) U(t)$ and $U(0)= I_N$,
\begin{equation}\label{poles}
    L(t) = U(t) L(0) U(t)^{-1},~ X(t) = U(t) (X(0) + t L(0)) U(t)^{-1}.
\end{equation}

We consider $[X] \in \mathbb{C}^{2n \times 2n}$, the doubled matrix of $X$ defined as in \eqref{double}. \eqref{poles} gives in particular, using appendix \ref{commute},
\begin{equation}\label{23}
    [X](t) = [U](t) ([X](0) + t [L](0)) [U]^{-1}(t),
\end{equation}

Using \eqref{Halfalp}, we have
\begin{equation*}
    \mathcal{E}(t) \mathcal{H} \mathcal{F}(t) = \begin{pmatrix}
        A_1(t) & 0 & \dots & 0\\
        0& A_2(t) & \dots & 0 \\
        \dots & \dots & \dots & \dots \\
        0& 0 & \dots & A_N(t)
    \end{pmatrix}.
\end{equation*}
Similarly, 
\begin{equation*}
    - \mathcal{E}(t) \mathcal{H} [X(t)-xI_N]^{-1} \mathcal{F}(t) = \begin{pmatrix}
        \frac{A_1(t)}{x-x_1(t)} & 0 & \dots & 0\\
        0& \frac{A_2(t)}{x-x_2(t)} & \dots & 0 \\
        \dots & \dots & \dots & \dots \\
        0& 0 & \dots & \frac{A_N(t)}{x-x_N(t)}
    \end{pmatrix}.
\end{equation*}
Now, we would like to state that
\begin{equation*}
    Tr\left( - \mathcal{E}(t) \mathcal{H} [X(t)-xI_N]^{-1} \mathcal{F}(t) \right) = \sum_{j=1}^N \frac{A_j(t)}{x-x_j(t)} = \Pi_{-} M(t,x),
\end{equation*}
but since $Tr(A_j(t)) = 0$, we have $Tr\left( - \mathcal{E}(t) \mathcal{H} [X(t)-xI_N]^{-1} \mathcal{F}(t) \right) = 0$. Instead, the following identity holds 
\begin{equation}\label{inter}
    -T^T \mathcal{E}(t) \mathcal{H} [X(t)-xI_N]^{-1} \mathcal{F}(t) T = \sum_{j=1}^N \frac{A_j(t)}{x-x_j(t)} = \Pi_- M(t,x).
\end{equation}
Now, we use the expression \eqref{23} obtained using the Lax structure to simplify \eqref{inter}. First, we write

\begin{equation*}
    [X(t) - x I_N] = [U](t) \left( [X(0)] + t [L(0)] - x [I_N] \right) [U]^{-1}(t)
\end{equation*}
Since $U^T = U^{-1}$, we also have $[U]^T = [U]^{-1}$, and we also have that $[U]$ (a doubled matrix) commutes with $\mathcal{H}$ (a constant diagonal matrix with blocks) using lemma \ref{commute}, so

\begin{multline*}
    \Pi_- M(t,x) = -T^T \mathcal{E}(t) \mathcal{H} [U](t) [X(0)+t L(0) - x I_N]^{-1} [U](t)^{-1} \mathcal{F}(t) T \\
    = - (\mathcal{H} [U]^{-1} \mathcal{E}(t) T)^T [X(0)+ t L(0) - x I_N]^{-1} [U]^{-1} \mathcal{F}(t) T,
\end{multline*}
which means 
\begin{equation*}
\boxed{ \Pi_- M(t,x) = - (\mathcal{H} [U]^{-1} \mathcal{E}(t) T)^T [X(0)+ t L(0) - x I_N]^{-1} [U]^{-1} \mathcal{F}(t) T }.
\end{equation*}
Using Lemma \ref{Evospins}, we obtain
\begin{equation}\label{objective}
    [U]^{-1} \mathcal{E}(t) T = \mathcal{E}(0) T,~ [U]^{-1} \mathcal{F}(t) T = \mathcal{F}(0) T,
\end{equation}
which gives 
\begin{equation*}
    \Pi_- M(t,x) = - (\mathcal{H} \mathcal{E}(0) T)^T [X(0) + t L(0) - x I_N]^{-1} \mathcal{F}(0) T,
\end{equation*}
so

\begin{equation*}
\boxed{
    \Pi_- M(t,x) = - T^T \mathcal{E}(0) \mathcal{H} [X(0)+tL(0)-xI_N]^{-1} \mathcal{F}(0)T },
\end{equation*}
which is our desired result.

\end{proof}

\subsection{Equivalence between the formulas}

This section is devoted to the proof that the formula provided by theorem \ref{HST2} and theorem \ref{gerardformula} are equivalent. We first start with the following lemma to go from $\Pi_-$ to $\Pi_+$, which holds by conjugation.

\begin{lemma}
    Let 
    \[
    M(t,x) = M_0 + V(t,x),
    \]
    with
    \[
    V(t,x) =\sum_{j=1}^N \frac{A_j(t)}{x - x_j(t)} + \sum_{j=1}^{N} \frac{A_j^*(t)}{x-\bar x_j(t)},~ ~ \left\{ \begin{aligned}
    &A_j(t) = E_j(t) H F_j(t),\\
    &\im x_j(t) >0.
    \end{aligned}
    \right.
    \]
    be a solution of (HWM).

    Then, the two formulas are equivalent
    \begin{equation*}
        \begin{aligned}
            &\Pi_- V(t,x) = - T^T \mathcal{E}_0 \mathcal{H} \left[ X(0) + t L(0) - xI_N\right]^{-1} \mathcal{F}_0 T \\
            \Leftrightarrow & \Pi_+ V(t,x) = -T^T \mathcal{F}_0^* \left[ \bar X(0) + t L^*(0) - xI_N \right]^{-1} \mathcal{H} \mathcal{E}_0^* T.
        \end{aligned}
    \end{equation*}
    
\end{lemma}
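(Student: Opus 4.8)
The plan is to show that the asserted formula for $\Pi_+ V$ is exactly the complex conjugate of the formula for $\Pi_- V$, using the fact that $V$ is real in the sense that $\Pi_+ V = \overline{\Pi_- V}$ (more precisely, $\Pi_+ V(t,x) = \overline{\Pi_-V(t,\bar x)}$ for real $x$, since the two sums in $V$ are exchanged by conjugation together with the relation $\overline{x_j} \leftrightarrow x_j$ and $A_j \leftrightarrow A_j^*$). So the first step is to take the $\Pi_-$ formula, replace $x$ by $\bar x$, and conjugate the entire right-hand side. Since $T$ and $\mathcal{H}$ are real constant matrices, $T^T$ is unchanged and $\mathcal{H}$ is unchanged; $\overline{\mathcal{E}_0} = \mathcal{E}_0^*{}^T$ is just $\mathcal{E}_0^*$ read as a diagonal matrix (for diagonal matrices, conjugate-transpose equals entrywise conjugate), and similarly $\overline{\mathcal{F}_0} = \mathcal{F}_0^*$; and $\overline{X(0)} = \bar X(0)$, $\overline{L(0)} = L^*(0)$ because $L$ is a complex matrix whose entrywise conjugate is its adjoint once we note $L(0)$ is symmetric (indeed $L_{j,k} = \xi_j\cdot e_k/(x_j-x_k)$, and one checks $L$ is symmetric using $\xi_j\cdot e_k = -\xi_k\cdot e_j$ together with the antisymmetry of $1/(x_j-x_k)$, so $\overline{L} = L^* = \bar L^T$, and for a symmetric matrix $L^* = \bar L$). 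Hence conjugating $-T^T \mathcal{E}_0 \mathcal{H}[X(0)+tL(0)-\bar x I_N]^{-1}\mathcal{F}_0 T$ gives $-T^T \mathcal{E}_0^* \mathcal{H}[\bar X(0) + t L^*(0) - x I_N]^{-1}\mathcal{F}_0^* T$.

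The second step is to reconcile this with the claimed right-hand side $-T^T \mathcal{F}_0^*[\bar X(0) + tL^*(0) - x I_N]^{-1}\mathcal{H}\mathcal{E}_0^* T$. The two expressions differ only in the placement of the diagonal factors $\mathcal{E}_0^*, \mathcal{F}_0^*, \mathcal{H}$ relative to the resolvent. Since all three are block-diagonal doubled-type matrices and $[\bar X(0)+tL^*(0)-xI_N]$ is a doubled matrix, I would invoke the commutation lemma from the appendix (referenced as Lemma~\ref{commute} in the paper) to move $\mathcal{H}$ past the resolvent, and use that $\mathcal{H}$ commutes with the diagonal half-spin matrices only up to the swap encoded in $\mathcal{H}$ itself; the key algebraic identity to verify is $\mathcal{E}_0^* \mathcal{H} = \mathcal{H}' \mathcal{F}_0^*$ (or the transposed version under $T^T(\cdot)$), which follows from the explicit $2\times 2$ block computation $E_j^* H = $ (something) $\cdot H F_j^*$ using $E_j = \mathrm{diag}(\alpha_j,\beta_j)$, $F_j = \mathrm{diag}(\beta_j,-\alpha_j)$, and $H = \begin{pmatrix}1&1\\1&1\end{pmatrix}$. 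Concretely one checks $H$ sandwiched between the relevant diagonal matrices produces the same rank-one blocks regardless of whether we read the product as coming from $\mathcal{E}_0^*\mathcal{H}\mathcal{E}_0^*$-type or $\mathcal{F}_0^*\mathcal{H}\mathcal{F}_0^*$-type groupings, after applying $T$ and $T^T$ which symmetrize over the two rows/columns of each block.

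The third step is bookkeeping: verify that contracting with $T$ on the right and $T^T$ on the left is insensitive to the reordering, i.e. that $T^T \mathcal{E}_0^* \mathcal{H} R \,\mathcal{F}_0^* T = T^T \mathcal{F}_0^* R\, \mathcal{H}\mathcal{E}_0^* T$ for any doubled matrix $R$ commuting appropriately — this reduces, block by block, to a $2\times 2$ trace-type identity of the form $\mathbf 1^T E_j^* H F_k^* \mathbf 1 = \mathbf 1^T F_j^* H E_k^* \mathbf 1$ where $\mathbf 1 = (1,1)^T$, which is immediate from $\mathbf 1^T(\cdot)\mathbf 1$ summing all entries and $H$ being all-ones, so both sides equal $(\alpha_j+\beta_j)(\ldots)$ symmetrically, or more simply from $(E_j^* H F_k^*)$ being rank one with $\mathbf 1^T(\cdot)\mathbf 1$ extracting a symmetric scalar.

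The main obstacle I anticipate is the third step's block-level identity: getting the swap of $\mathcal{E}_0^*\mathcal{H}$ versus $\mathcal{H}\mathcal{F}_0^*$ (or $\mathcal{F}_0^*\mathcal{H}$ versus $\mathcal{H}\mathcal{E}_0^*$) exactly right, since $\mathcal{E}$ and $\mathcal{F}$ are \emph{not} simply related by transposition — one has a sign flip in the second slot ($F_j$ has $-\alpha_j$ where one might naively expect $\alpha_j$). I expect this sign to be absorbed correctly because $H$ has rank one and the contraction with $T$, $T^T$ only sees the all-ones combinations; but it must be checked honestly rather than waved through, and it is the one place where a careless index could produce a spurious sign. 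Everything else — the reality/conjugation symmetry of $V$, the symmetry of $L(0)$, and the commutation of doubled matrices with block-diagonal constant matrices — is either elementary or already established earlier in the paper.
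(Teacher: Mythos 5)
Your proof has a genuine gap, and it sits exactly at the "main obstacle" you flagged. The correct reality relation here is $\Pi_+V(t,x)=\bigl(\Pi_-V(t,x)\bigr)^*$ with $*$ the \emph{Hermitian adjoint} of the $2\times2$ matrix value: conjugating $\sum_j A_j/(x-x_j)$ entrywise produces $\sum_j \overline{A_j}/(x-\bar x_j)=\bigl(\Pi_+V\bigr)^T$, since $\overline{A_j}=(A_j^*)^T\neq A_j^*$ in general (e.g.\ whenever $s_{j,2}\neq 0$, $A_j$ is not symmetric). The paper's proof is exactly this one observation plus taking the adjoint of the $\Pi_-$ formula: the adjoint \emph{reverses the order of the factors}, which is precisely what puts $\mathcal{F}_0^*$ on the left and $\mathcal{H}\mathcal{E}_0^*$ on the right, using only $T^*=T^T$, $\mathcal{H}^*=\mathcal{H}$, and $\bigl(X(0)+tL(0)-xI_N\bigr)^*=\bar X(0)+tL^*(0)-xI_N$ for real $x$. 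No commutation of $\mathcal{H}$ past the resolvent and no swap identity between $\mathcal{E}$ and $\mathcal{F}$ is needed at all.

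Because you conjugate entrywise (which does not reverse products), your first step yields $-T^T\mathcal{E}_0^*\mathcal{H}\,[\bar X(0)+tL^*(0)-xI_N]^{-1}\mathcal{F}_0^*T$, which is the \emph{transpose} of the target expression (by the symmetry of $X$, $L$ and the doubling), not the target itself; your steps 2--3 then try to prove these coincide, and they do not in general. In particular the reduction to $\mathbf{1}^T(\cdot)\mathbf{1}$ misreads $T$: since $T$ stacks $I_{2\times2}$ blocks rather than being a column of ones, $T^T M T$ is the block sum $\sum_{j,k}M^{(j,k)}$ of $2\times2$ blocks, so the identity you would need is $\sum_{j,k}R_{jk}\,E_j^*HF_k^*=\sum_{j,k}R_{jk}\,F_j^*HE_k^*$ as $2\times2$ matrices; even the scalar (all-entries) version fails, since the entry sums are $(\bar\alpha_j+\bar\beta_j)(\bar\beta_k-\bar\alpha_k)$ versus $(\bar\beta_j-\bar\alpha_j)(\bar\alpha_k+\bar\beta_k)$, which differ for generic $j\neq k$. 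So the sign/ordering issue you anticipated cannot be absorbed by the contraction with $T$; the repair is simply to run the argument with the Hermitian adjoint from the outset, which collapses the whole proof to the two-line computation in the paper.
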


\begin{proof}
    To show this, we simply write that in our case, 
    \[
    \Pi_- V(t,x) = \left( \Pi_+ V(t,x) \right)^*.
    \]
    We hence show that the expressions $- T^T \mathcal{E}_0 \mathcal{H} \left[ X(0) + t L(0) - xI_N\right]^{-1} \mathcal{F}_0 T$ and \\
    $-T^T \mathcal{F}_0^* \left[ \bar X(0) + t \mathcal{L}(0) - xI_N \right]^{-1} \mathcal{H} \mathcal{E}_0^* T$ are hermitian conjugates of each other.

    Since $T^* = T^T$ and $\mathcal{H}^*=\mathcal{H}$, we obtain

    \begin{multline*}
        \left( -T^T \mathcal{E}_0 \mathcal{H} [X(0)+tL(0)-x I_N]^{-1} \mathcal{F}_0 T \right)^* \\
        = - T^T \mathcal{F}_0^*  [\left(X(0)+tL(0)-xI_N\right)^{*}]^{-1}  \mathcal{H} \mathcal{E}_0^* T \\
        = -T^T \mathcal{F}_0^* \left[ \bar X(0)+t L^*(0) - xI_N \right]^{-1} \mathcal{H} \mathcal{E}_0^* T.
    \end{multline*}

\end{proof}

Note that since we have 

\begin{equation*}
    \mathcal{E}_0^* = \Diag{E_1(0)^*}{E_2(0)^*}{E_N(0)^*},~ \mathcal{F}_0^* = \Diag{F_1(0)^*}{F_2(0)^*}{F_N(0)^*},
\end{equation*}

and since $A_j^* = (E_j H F_j)^* = F_j^* H E_j^*$, the matrices $\mathcal{F}_0^*$ and $\mathcal{E}_0^*$ are simply the matrices of the half spins of the spins $A_j^*$ at $t=0$. 

We now state the main theorem of this section.

\begin{theorem}
    Let 
    \[
    M(t,x) = M_0 + V(t,x),
    \]
    with
    \[
    V(t,x) =\sum_{j=1}^N \frac{A_j(t)}{x - x_j(t)} + \sum_{j=1}^{N} \frac{A_j^*(t)}{x-\bar x_j(t)},~ ~ \left\{ \begin{aligned}
    &A_j(t) = E_j(t) H F_j(t),\\
    &\im \left(x_j(t)\right) >0.
    \end{aligned}
    \right.
    \]
    be a solution of (HWM).

    Then, with $V_0(y) = V(0,y)$

    \[
    \Pi_+ V(t,x) = \frac{1}{2 \iu \pi} I_+\left( (G - t T_{U_0} - xI_N)^{-1} \Pi_+ V_0 \right).
    \]
    
\end{theorem}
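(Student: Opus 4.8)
The goal is to identify the concrete matrix formula

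\[
\Pi_+ V(t,x) = -T^T \mathcal{F}_0^* \left[ \bar X(0) + t L^*(0) - xI_N \right]^{-1} \mathcal{H} \mathcal{E}_0^* T
\]

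from the previous lemma with the operator-theoretic expression $\frac{1}{2\iu\pi} I_+\big( (G - t T_{U_0} - xI_N)^{-1} \Pi_+ V_0 \big)$. The strategy is to compute the right-hand side directly by diagonalizing the operator $G$ against the rational structure. First I would recall that for $f \in L_+^2(\mathbb{R})$ of the form $f(y) = \sum_j \frac{c_j}{y - \bar x_j}$ (a generic element of the $N$-dimensional space spanned by the simple-pole rational functions with poles in the \emph{lower} half-plane, which is where $\Pi_+ V_0$ lives since $\im(x_j)>0$), one has $\widehat{f}(\xi) = -2\iu\pi \sum_j c_j e^{-\iu \bar x_j \xi}\mathbbm{1}_{\xi>0}$, so that $I_+(f) = \hat f(0^+) = -2\iu\pi \sum_j c_j$ and $G$ acts on this finite-dimensional model space as the companion-type matrix whose action reproduces multiplication of the pole data by $\bar X(0)$. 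The key algebraic fact to establish is that, in the basis $\{ (y-\bar x_j(0))^{-1} \}_j$ of this model space, $G$ is represented by $\bar X(0)$ (up to the off-diagonal corrections coming from $G$ not being exactly diagonal), and crucially that $T_{U_0}$ — the Toeplitz operator with symbol $U_0 = U(0,x) = M(0,x)\cdot\boldsymbol\sigma$ restricted appropriately — is represented on the same space by $L^*(0)$ (or $L(0)$, depending on the conjugation convention). This is the analogue of the identity $L_{u_0} \leftrightarrow T_{u_0}$ in Gérard's Benjamin–Ono paper.

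The main steps, in order: (1) Fix the finite-dimensional space $\mathcal{V} = \Pi_+(\text{span of the } V_0 \text{ data}) = \mathrm{span}\{(y - \bar x_j(0))^{-1}\}$, noting $\Pi_+ V_0(y) = \sum_j \frac{A_j^*(0)}{y - \bar x_j(0)}$; this is where the matrices $\mathcal{F}_0^* T$ and $\mathcal{E}_0^* T$ naturally encode the residues since $A_j^* = F_j^* H E_j^*$. (2) Show that $\mathcal{V}$ is invariant under $G$ and under $T_{U_0}$, and compute the matrix of each in the chosen basis: $G \mapsto \bar X(0)^{\mathrm{something}}$ and $T_{U_0} \mapsto$ a matrix conjugate to $L(0)$. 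For $G$ this is a direct Fourier-side computation using $\widehat{Gf}(\xi) = \iu \partial_\xi \hat f(\xi)$; for $T_{U_0}$ one expands $U_0(y) \cdot (y-\bar x_k)^{-1}$, projects with $\Pi_+$, and reads off residues, using $M_0 = U_\infty$ and the constraint relations from \eqref{consmat} to recognize the matrix $B_j$ appearing in the definition of $L$. (3) Assemble: $(G - t T_{U_0} - xI)^{-1}$ restricted to $\mathcal{V}$ becomes $(\bar X(0) - tL^*(0) - xI_N)^{-1}$ in matrix form — but note the sign: Gérard's formula has $G - 2tL_{u_0}$, and here the HWM time evolution gave $X(t) = U(t)(X(0) + tL(0))U(t)^{-1}$ with a $+tL(0)$, so after Hermitian conjugation (the previous lemma) we land on $\bar X(0) + tL^*(0)$, and consistency forces $T_{U_0}$ to be represented by $-L^*(0)$. (4) Apply $I_+$, which turns the residue vector back into its sum (times $-2\iu\pi$), contract with $T^T$ on the left, and match to the boxed formula; the factor $\frac{1}{2\iu\pi}$ exactly cancels the $-2\iu\pi$ from $I_+$, with the leftover sign producing the minus in front.

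The hard part will be step (2), specifically pinning down that $T_{U_0}$ acts on $\mathcal{V}$ as (the conjugate of) $\pm L^*(0)$ with the correct sign and that $\mathcal{V}$ is genuinely $T_{U_0}$-invariant — this requires carefully using the constraint equations $A_j^2 = 0$, $B_jA_j + A_jB_j = 0$ and the explicit form of $B_j$ to see that the "extra" pole at infinity from $M_0$ and the cross terms organize exactly into the off-diagonal entries $\xi_j \cdot e_k / (x_j - x_k)$ of $L(0)$, while the diagonal $\dot x_j(0)$ entries come from the constrained initial velocity. A secondary subtlety is bookkeeping the doubled-matrix formalism: $G$ and $T_{U_0}$ naturally act on $2\times 2$-matrix-valued rational functions, so one must track that $[\bar X(0)]$, $[L^*(0)]$ and the half-spin matrices $\mathcal{E}_0^*, \mathcal{F}_0^*, \mathcal{H}$ interact through the commutation lemma (Appendix \ref{commute}) exactly as in the proof of Theorem \ref{HST2}, so that contracting with $T$ on both sides collapses the $2N\times 2N$ computation back to the $N\times N$ resolvent $[X(0) + tL(0) - xI_N]^{-1}$. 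Once these two representation facts are in hand, the identification is a short substitution.
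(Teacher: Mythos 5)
Your proposal is correct and follows essentially the same route as the paper: you represent $G$ and $T_{U_0}$ on the finite-dimensional model space spanned by the simple poles $(y-\bar x_j(0))^{-1}$ (the paper's basis $K_j = F_j^*H/(y-\bar x_j)$), identify $G$ with $[\bar X(0)]$ and $T_{U_0}$ with $-[L^*(0)]$ via a residue computation using the constraint $B_j^*F_j^* = -\bar b_j F_j^*$, and then apply $I_+$ to cancel the $-2\iu\pi$ factor and recover the matrix formula of the preceding lemma. The only cosmetic remark is that your hedge about ``off-diagonal corrections'' for $G$ is unnecessary: $G$ acts exactly diagonally on this basis, since $\widehat{(y-\bar x_j)^{-1}}(\xi) = -2\pi\iu\, e^{-\iu\bar x_j\xi}\mathbbm{1}_{\xi>0}$ gives $\iu\partial_\xi \hat f = \bar x_j \hat f$ on $\xi>0$.
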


\begin{proof}
    We already have that 
    \[
    \Pi_+ V(t,x) = - T^T \mathcal{F}^*_0 \left[ \bar X(0) + t L^*(0) - xI_N \right]^{-1} \mathcal{H} \mathcal{E}^*_0 T.
    \]

We now show that this is equivalent to 

\begin{equation*}
    \Pi_+V(t,x) = \frac{1}{2\iu \pi}I_+\left( (G+T_{U_0}-xI)^{-1} \Pi_+ V(0) \right).
\end{equation*}

We have 
\begin{equation*}
    \Pi_+ V_0 = \sum_{j=1}^N \frac{ A_j^*(0)}{y-\bar x_j} = \sum_{j=1}^N \frac{F_j^* H E_j^*}{y - \bar x_j}.
\end{equation*}

We now consider the basis

\[
\mathcal{B}=(K_1,\dots,K_N),~K_1 = \frac{F_1^* H}{y-\bar x_1},\dots,K_N=\frac{F_N^{*}H }{y-\bar x_N},
\]

and define the decomposition in this basis, for $(C_1,\dots,C_N)$, $N$ $2\times2$ complex and diagonal matrices, as

\begin{equation*}
    \left( C_1,\dots,C_N \right)_\mathcal{B} = \sum_{j=1}^N K_j C_j = \sum_{j=1}^N \frac{F_j^* H C_j}{y-\bar x_j},
\end{equation*}

finally, we define $V_\mathcal{B}=\operatorname{span}(\mathcal{B})$ as 
\[
V_\mathcal{B} = \left\{ f,~f=(C_1,\dots,C_N)_{\mathcal{B}},~C_1,\dots,C_N \in \mathbb{C}^{2\times 2},\text{ diagonal.} \right\}
\]

We have that $\Pi_+ V_0 \in V_\mathcal{B}$ and

\[
 \Pi_+ V_0  = \left(E_1^*,\dots,E_N^* \right)_\mathcal{B}.
\]

Now, in $\mathcal{B}$, the operator $G$ is represented by the matrix 
\begin{equation*}
    G=[\bar X(0)],~\bar X(0) = \Diag{\bar x_1(0)}{\bar x_2(0)}{\bar x_N(0)}.
\end{equation*}

Let us now consider a function $f: \mathbb{R} \to \mathbb{C}^2$, such that $f\in V_\mathcal{B}$ and 

\[
f = (C_1,\dots,C_N)_\mathcal{B}.
\]

Then,

\[
\frac{1}{2 \iu \pi} I_+(f) = \frac{1}{2 \iu \pi} I_+ \left( \sum_{j=1}^N \frac{F_j^* H C_j }{y-\bar x_j} \right) = - \sum_{j=1}^N F_j^* H C_j.
\]

Hence, if $F$ is a diagonal matrix representing $f$ in $\mathcal{B}$, we have
\[
F = \Diag{C_1}{C_2}{C_N},~ \frac{1}{2 \iu \pi}  I_+ (f) = -T^T \mathcal{F}^*_0 \mathcal{H} F T.
\]

We now find a representation of the operator $T_{U_0}$, which is linear. 
\[
\begin{aligned}
&T_{U_0} \left( K_j C_j \right) = T_{U_0} \left( \frac{F_j^* H C_j}{x-\bar x_j} \right) \\
&= \left( M_0 + \sum_{k\neq j} \frac{A_k^*}{\bar x_j - \bar x_k} + \sum_{k=1}^N \frac{A_k}{\bar x_j - x_k} \right) \frac{F_j^* H C_j}{x - \bar x_j} - \sum_{k\neq j} \frac{A_k^* F_j^* H C_j}{(\bar x_j - \bar x_k)(y-\bar x_k)}\\
&= \frac{B_J^* F_j^* H C_j}{y-\bar x_j} + \sum_{k\neq j}\frac{F_k^* H E_k^* F_j^* H C_j}{(\bar x_k - \bar x_j)(x-\bar x_k)}\\
&=- \bar b_j K_j C_j + \sum_{k\neq j} K_k  \frac{(\bar e_k\cdot \bar \xi_j) C_j}{\bar x_k - \bar x_j} = \sum_{k=1}^N \mathcal{L}_{k,j} K_k C_j,\\
&\text{With }\mathcal{L}_{k,j} = \frac{(\bar e_k\cdot \bar \xi_j)}{\bar x_k - \bar x_j},~k\neq j,~ -\bar b_j,~ k=j.
\end{aligned}
\]

Since no $2\times 2$ product is involved in $\mathcal{L}_{k,j} \in \mathbb{C}$, we can then represent $T_{U_0}$ by a doubled matrix. Indeed, we have then by linearity 

\[
T_{U_0} \left( \sum_{j=1}^N K_j C_j \right) = \sum_{j=1}^N \sum_{k=1}^N \mathcal{L}_{k,j} K_k C_j,
\]

which means that $T_{U_0}$ is represented in the basis $\mathcal{B}$ by the doubled matrix $[\mathcal{L}]$, since 

\[
\begin{pmatrix}
    \LL_{1,1} & 0         & \LL_{1,2} &         0 & \dots & \dots & \LL_{1,N} & 0        \\
    0         & \LL_{1,1} & 0         & \LL_{1,2} & \dots & \dots & 0         & \LL_{1,N}\\
    \LL_{2,1} & 0         & \LL_{2,2} & 0         & \dots & \dots & \vdots & \vdots \\
    0         & \LL_{2,1} & 0         & \LL_{2,2} & \dots & \dots & \vdots & \vdots \\
    \vdots    & \vdots    & \vdots    & \vdots    & \ddots & \ddots & \vdots & \vdots \\
    \LL_{N,1} & 0         & \dots     & \dots     & \dots     & \dots & \LL_{N,N} & 0 \\
    0         & \LL_{N,1} & \dots     & \dots     & \dots     & \dots & 0 & \LL_{N,N} \\
\end{pmatrix} 
\begin{pNiceArray}{cc}
    \Block{2-2}<\Large>{C_1} \\
    \\
    \Block{2-2}<\Large>{C_2} \\
    \\
    \vdots & \vdots \\
    \vdots & \vdots \\
    \Block{2-2}<\Large>{C_N} \\
    \\
\end{pNiceArray}=
\begin{pmatrix}
    \left( \sum_{k=1}^N \mathcal{L}_{1,k} C_k \right)  \\
    \vdots  \\
    \left( \sum_{k=1}^N \mathcal{L}_{N,k} C_k \right) 
\end{pmatrix}
\]

Hence, if $f$ is represented by $\mathcal{C}=(C_1,\dots,C_N)_\mathcal{B}$ in $\mathcal{B}$, then $T_{U_0}(f)$ is represented by $[\LL] \mathcal{C} $ in $\BB$. Note that we have $\mathcal{L} = -L^*$, since 
\[
\left( \mathcal{L}^* \right)_{k,j} = \frac{e_j \cdot \xi_k}{x_j-x_k} = - \frac{\xi_j \cdot e_k}{x_j-x_k} = - \left( L \right)_{k,j},~\text{for } k\neq j,
\]
and 
\[
(\mathcal{L})^*_{j,j} = - b_j = - L_{j,j}.
\]
Finally, $\Pi_+V(y,0) = (E_1^*(0),\dots,E_N^*(0))_\mathcal{B}$ in $\mathcal{B}$ gives 
\[
\begin{aligned}
(G-t T_{U_0}-xI_N)^{-1}(\Pi_+ V_0) &= \left( [\bar X(0) - t \mathcal{L} - xI_N]^{-1} \left( E_1^*(0),\dots,E_N^*(0) \right)^T \right)_\BB \\
& = \left( [\bar X(0) + t L^* - xI_N]^{-1} \mathcal{E}^*_0 T\right)_\BB.
\end{aligned}
\]
Hence, 
\[
\frac{1}{2\iu \pi} I_+ \left( (G-t T_{U_0}-xI_N)^{-1}(\Pi_+ V_0 \right) = - T^T \mathcal{F}_0^* \mathcal{H} [\bar X(0)+ t L^* - x I_N]^{-1} \mathcal{E}_0^* T, 
\]

which concludes the proof that the two formulas are equivalent. 

\end{proof}

\printbibliography

\appendix

\section{Technical results}

\begin{lemma}\label{concat}
    We have $H D_1 D_2 H = (d_1 \cdot d_2 ) H $ when $D_1$, $D_2$ are two diagonal matrices with $d_1$ and $d_2$ on the diagonal. 
\end{lemma}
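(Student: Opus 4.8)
The plan is to carry out the multiplication directly, organising it through the rank-one structure of $H$. First I would observe that $H=\mathbf{1}\mathbf{1}^{T}$ with $\mathbf{1}=(1,1)^{T}$, so that for \emph{any} $2\times2$ matrix $K$ one has $HKH=\mathbf{1}\,(\mathbf{1}^{T}K\mathbf{1})\,\mathbf{1}^{T}=(\mathbf{1}^{T}K\mathbf{1})\,H$, where the scalar $\mathbf{1}^{T}K\mathbf{1}$ is simply the sum of all four entries of $K$.

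Then I would apply this with $K=D_{1}D_{2}$. Since $D_{1},D_{2}$ are diagonal with diagonals $d_{1}=(d_{1}[1],d_{1}[2])$ and $d_{2}=(d_{2}[1],d_{2}[2])$, the product $D_{1}D_{2}$ is diagonal with diagonal $(d_{1}[1]d_{2}[1],\,d_{1}[2]d_{2}[2])$, hence $\mathbf{1}^{T}D_{1}D_{2}\mathbf{1}=d_{1}[1]d_{2}[1]+d_{1}[2]d_{2}[2]=d_{1}\cdot d_{2}$ (the symmetric bilinear pairing, without complex conjugation, consistent with the convention used elsewhere, e.g. $e_{j}\cdot\xi_{j}=0$ and Lemma \ref{Evospins}). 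Substituting gives $HD_{1}D_{2}H=(d_{1}\cdot d_{2})H$. Equivalently, and perhaps more transparently, one can just display the two successive products: $HD_{1}D_{2}$ has both rows equal to $(d_{1}[1]d_{2}[1],\,d_{1}[2]d_{2}[2])$, and right-multiplication by $H$ then replaces each row by the constant row with entries $d_{1}[1]d_{2}[1]+d_{1}[2]d_{2}[2]$, which is precisely $(d_{1}\cdot d_{2})H$.

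There is no genuine obstacle here; this is a one-line computation. The only point that deserves a word of caution is the meaning of ``$\cdot$'': it is the bilinear form $\sum_{i}u_{i}v_{i}$, not the Hermitian inner product, which matters because the $d_{i}$ may be complex. I would also note in passing that the identity holds verbatim for $n\times n$ all-ones matrices $H$ and $n\times n$ diagonal $D_{1},D_{2}$ — the same computation is what makes the block manipulations of Section~2 go through — but only the $2\times2$ case is needed in the derivation of \eqref{n}.
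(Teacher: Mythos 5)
Your proof is correct and follows essentially the same route as the paper, which simply multiplies out the four matrices directly; your rank-one factorization $H=\mathbf{1}\mathbf{1}^{T}$ is a slightly cleaner way of organising the identical one-line computation, and your remark that ``$\cdot$'' is the bilinear (not Hermitian) pairing is consistent with the paper's conventions.
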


\begin{proof}
\begin{multline*}
    \begin{pmatrix}
        1 & 1 \\
        1 & 1 \\
    \end{pmatrix}
    \begin{pmatrix}
        d_1(1) & 0 \\
        0 & d_1(2) 
    \end{pmatrix}
    \begin{pmatrix}
        d_2(1) & 0 \\
        0 & d_2(2) 
    \end{pmatrix}
    \begin{pmatrix}
        1 & 1 \\
        1 & 1 \\
    \end{pmatrix}
    \\
    = \begin{pmatrix}
        1 & 1 \\
        1 & 1 \\
    \end{pmatrix}
    \begin{pmatrix}
        d_1(1)d_2(1) & d_1(1)d_2(1) \\
        d_1(2)d_2(2) & d_1(2)d_2(2) \\
    \end{pmatrix}
    =  (d_1(1) d_2(1) + d_1(2) d_2(2)) H.
\end{multline*}
\end{proof}

\begin{lemma}\label{lemme:sep}
    Let 
    \begin{equation*}
        \mathcal{K}_1 = \begin{pmatrix}
            \gamma & 0 \\
            0 & \delta
        \end{pmatrix},~
        \mathcal{K}_2 = \begin{pmatrix}
            \delta & 0 \\
            0 & -\gamma
        \end{pmatrix},
    \end{equation*}

    then 
    \begin{equation*}
        \mathcal{K}_1 H F_j + E_j H \mathcal{K}_2 = 0
    \end{equation*}
    implies $\mathcal{K}_1 = \mathcal{K}_2 = 0$.

\end{lemma}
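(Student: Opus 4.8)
The plan is to write out the matrix identity $\mathcal{K}_1 H F_j + E_j H \mathcal{K}_2 = 0$ entrywise and read off a linear system in $\gamma$ and $\delta$. Recall that $E_j = \begin{pmatrix} \alpha_j & 0 \\ 0 & \beta_j \end{pmatrix}$ and $F_j = \begin{pmatrix} \beta_j & 0 \\ 0 & -\alpha_j \end{pmatrix}$ are the canonical half-spins, and $H = \begin{pmatrix} 1 & 1 \\ 1 & 1 \end{pmatrix}$. First I would compute the two products separately: $\mathcal{K}_1 H F_j = \begin{pmatrix} \gamma & 0 \\ 0 & \delta \end{pmatrix}\begin{pmatrix} 1 & 1 \\ 1 & 1 \end{pmatrix}\begin{pmatrix} \beta_j & 0 \\ 0 & -\alpha_j \end{pmatrix} = \begin{pmatrix} \gamma\beta_j & -\gamma\alpha_j \\ \delta\beta_j & -\delta\alpha_j \end{pmatrix}$, and similarly $E_j H \mathcal{K}_2 = \begin{pmatrix} \alpha_j & 0 \\ 0 & \beta_j \end{pmatrix}\begin{pmatrix} 1 & 1 \\ 1 & 1 \end{pmatrix}\begin{pmatrix} \delta & 0 \\ 0 & -\gamma \end{pmatrix} = \begin{pmatrix} \alpha_j\delta & -\alpha_j\gamma \\ \beta_j\delta & -\beta_j\gamma \end{pmatrix}$.

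Adding these and setting the sum to zero gives four scalar equations, which collapse (by symmetry of the off-diagonal entries) to essentially two: the $(1,1)$ entry yields $\gamma\beta_j + \alpha_j\delta = 0$, and the $(2,2)$ entry yields $-\delta\alpha_j - \beta_j\gamma = 0$, which is the same equation; the $(1,2)$ and $(2,1)$ entries give $-\gamma\alpha_j - \alpha_j\gamma = -2\alpha_j\gamma = 0$ and $\delta\beta_j + \beta_j\delta = 2\beta_j\delta = 0$ respectively. Hence, provided $\alpha_j \neq 0$ and $\beta_j \neq 0$, we immediately conclude $\gamma = 0$ and $\delta = 0$, so $\mathcal{K}_1 = \mathcal{K}_2 = 0$.

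The only subtlety — and the point I expect to need a remark — is the nondegeneracy of $\alpha_j$ and $\beta_j$. In the canonical half-spin parametrization $\alpha_j^2 = -s_{j,1} + i s_{j,2}$ and $\beta_j^2 = s_{j,1} + i s_{j,2}$, so $\alpha_j = 0$ forces $s_{j,1} = s_{j,2} = 0$ and likewise for $\beta_j$; if either vanishes one checks directly that $A_j = E_j H F_j$ is either zero or rank-deficient in a way incompatible with being a genuine simple pole with $s_j^2 = 0$, $s_j \neq 0$ (indeed $s_j^2 = 0$ together with $\alpha_j = 0$ would give $s_{j,3} = 0$ as well, so $s_j = 0$). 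Thus for a nontrivial pole both $\alpha_j$ and $\beta_j$ are nonzero, and the argument above applies. Should one wish to avoid this case analysis entirely, an alternative is to note that $E_j H F_j = A_j \neq 0$ forces the diagonal entries of $E_j$ and $F_j$ to be nonzero, which is exactly what is used.
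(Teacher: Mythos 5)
Your entrywise computation is exactly the paper's: both products agree with the paper's displayed matrices, and the four scalar equations you extract are correct. The paper then concludes differently, and the difference matters. Your main argument uses only the off-diagonal entries, $-2\alpha_j\gamma=0$ and $2\beta_j\delta=0$, and therefore needs \emph{both} $\alpha_j\neq0$ and $\beta_j\neq0$. Your justification that both are nonzero is incorrect: $\alpha_j=0$ means $-s_{j,1}+is_{j,2}=0$, i.e. $s_{j,1}=is_{j,2}$, not $s_{j,1}=s_{j,2}=0$ (the $s_{j,k}$ are complex). For instance $s_j=(i,1,0)$ is a nonzero null vector ($s_j^2=i^2+1+0=0$) with $\alpha_j=0$ and $\beta_j=\sqrt{2i}\neq0$, and then
\begin{equation*}
A_j=E_jHF_j=\begin{pmatrix}0&0\\ \beta_j^2&0\end{pmatrix}\neq0,
\end{equation*}
so your closing remark that $A_j\neq0$ forces both diagonal entries of $E_j$ to be nonzero is also false. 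In this genuinely occurring case your main argument does not yield $\gamma=0$.

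The repair is short and is what the paper does: use the diagonal equation $\gamma\beta_j+\delta\alpha_j=0$ as a second relation. If $\alpha_j\neq0$, the $(1,2)$ entry gives $\gamma=0$ and then the diagonal entry gives $\delta=0$; if instead $\beta_j\neq0$, the $(2,1)$ entry gives $\delta=0$ and then the diagonal entry gives $\gamma=0$. Thus only the fully degenerate case $\alpha_j=\beta_j=0$ (equivalently $E_j=F_j=0$, i.e. $s_j=0$) must be excluded, which both you and the paper implicitly assume. You already have all the needed equations; only the case analysis needs to be corrected.
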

\begin{proof}

    We obtain from $\mathcal{K}_1 H F_j + E_j H \mathcal{K}_2 = 0$ that 
    \begin{equation*}
        \begin{pmatrix}
            \gamma \beta_j & - \gamma \alpha_j \\
            \delta \beta_j & - \delta \alpha_j
        \end{pmatrix} + 
        \begin{pmatrix}
            \delta \alpha_j & - \gamma \alpha_j \\
            \delta \beta_j & - \gamma \beta_j
        \end{pmatrix} =   
        \begin{pmatrix}
            0 &  0 \\
            0 & 0
        \end{pmatrix}.
    \end{equation*}

    Assume first $\alpha_j \neq 0$, then $\gamma = 0$, and $\delta \alpha_j + \gamma \beta_j = 0$ gives $\delta = 0$. If $\beta_j \neq 0$, then $\delta = 0$, and $\delta \alpha_j + \gamma \beta_j = 0$ gives $\gamma = 0$.
    
\end{proof}

\begin{lemma}\label{commute}
    Let $A$ and $B$ be two $N\times N$ matrices with complex entries. Then, 
    \begin{equation}\label{premiercaslemme}
        [AB] = [A]\cdot [B],
    \end{equation}
    and with $C$ a $2 \times 2$ matrix, 
    \begin{equation}\label{deuxiemecaslemme}
        \begin{pmatrix}
        C & 0_{2\times 2} & \dots & 0_{2\times 2} \\
        0_{2\times 2} & C & \dots & 0_{2\times 2} \\
        \vdots & \vdots & \ddots & \vdots \\
        0_{2\times 2} & 0_{2\times 2} & \dots & C
        \end{pmatrix} \cdot A = A \cdot \begin{pmatrix}
        C & 0_{2\times 2} & \dots & 0_{2\times 2} \\
        0_{2\times 2} & C & \dots & 0_{2\times 2} \\
        \vdots & \vdots & \ddots & \vdots \\
        0_{2\times 2} & 0_{2\times 2} & \dots & C
        \end{pmatrix}.
    \end{equation}
    
\end{lemma}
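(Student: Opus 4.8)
The plan is to verify both identities by direct block-matrix computation, exploiting the fact that the doubling operation $U \mapsto [U]$ of \eqref{double} simply replaces each scalar entry $U_{i,j}$ by the $2\times 2$ block $U_{i,j} I_{2\times 2}$ sitting in rows/columns $\{2i-1,2i\}\times\{2j-1,2j\}$ (with all cross-terms vanishing); equivalently, up to the chosen ordering of indices, $[U]$ is the Kronecker product $U \otimes I_{2\times 2}$. A preliminary step is therefore to record this blockwise description and check that it is exactly what the displayed formulas for $[U]_{2i,2j}$, $[U]_{2i-1,2j-1}$, $[U]_{2i,2j-1}$, $[U]_{2i-1,2j}$ say.

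For \eqref{premiercaslemme}, I would compute the $(i,j)$-th $2\times 2$ block of the product $[A]\cdot[B]$. By block multiplication it equals $\sum_{k=1}^N (A_{i,k} I_{2\times 2})(B_{k,j} I_{2\times 2}) = \bigl(\sum_{k=1}^N A_{i,k} B_{k,j}\bigr) I_{2\times 2} = (AB)_{i,j}\, I_{2\times 2}$, which is precisely the $(i,j)$-th block of $[AB]$. Since two matrices that agree on every $2\times 2$ block coincide, this gives $[AB]=[A][B]$. (Equivalently, one may just invoke multiplicativity of the Kronecker product, $(A\otimes I)(B\otimes I) = (AB)\otimes I$.)

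For \eqref{deuxiemecaslemme}, I read the matrix denoted $A$ on both sides as a doubled matrix, i.e. of the form $[A']$ for some $N\times N$ matrix $A'$ — equivalently, a $2N\times2N$ matrix each of whose $2\times2$ blocks is a scalar multiple of $I_{2\times2}$ — since this is the only case needed in the paper (namely $[U]$ commuting with $\mathcal{H}$). Writing $D := \mathrm{diag}(C,\dots,C)$, whose $(i,j)$-th block is $\delta_{i,j} C$, the $(i,j)$-th block of $D\,[A']$ is $\sum_k (\delta_{i,k} C)(A'_{k,j} I_{2\times 2}) = A'_{i,j}\,C$, and the $(i,j)$-th block of $[A']\,D$ is $\sum_k (A'_{i,k} I_{2\times 2})(\delta_{k,j} C) = A'_{i,j}\,C$. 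These agree because $A'_{i,j} I_{2\times 2}$, being a scalar multiple of the identity, commutes with $C$; hence $D[A']=[A']D$.

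There is essentially no obstacle here: the statement is pure linear-algebra bookkeeping. The only points requiring mild care are matching the paper's index convention in \eqref{double} with the blockwise picture used above, and noting that in \eqref{deuxiemecaslemme} the matrix commuted past $D$ must be of doubled type (scalar-multiple-of-identity blocks) — for a general $2N\times2N$ matrix the commutation fails unless $C$ itself is scalar, so one should either restrict to that class or, in the displayed equation, read $A$ as $[A]$.
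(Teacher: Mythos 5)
Your proof is correct and follows essentially the same route as the paper's: both identities are verified by direct $2\times 2$ block multiplication, using that each block of a doubled matrix is a scalar multiple of $I_{2\times 2}$ and hence commutes with $C$. You also rightly note (as the paper's proof implicitly does by writing $\mathcal{C}\cdot[A]=[A]\cdot\mathcal{C}$) that in \eqref{deuxiemecaslemme} the matrix $A$ must be read as the doubled matrix $[A]$, since the commutation fails for a general $2N\times 2N$ matrix.
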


\begin{proof}
    The results are easily obtained by matrix multiplication. For \eqref{premiercaslemme}, we have 
    \begin{equation*}
        [AB] = \left( \sum_{k=1}^N a_{i,k} b_{k,j} I_{2\times 2} \right)_{i,j} = \left( \sum_{k=1}^N a_{i,k} I_{2\times 2} b_{k,j} I_{2\times 2} \right)_{i,j}=[A]\cdot [B]
    \end{equation*}

    For \eqref{deuxiemecaslemme}, we have with 
    
    \begin{equation*}
        \mathcal{C} = \begin{pmatrix}
        C & 0_{2\times 2} & \dots & 0_{2\times 2} \\
        0_{2\times 2} & C & \dots & 0_{2\times 2} \\
        \vdots & \vdots & \ddots & \vdots \\
        0_{2\times 2} & 0_{2\times 2} & \dots & C
        \end{pmatrix},
    \end{equation*}
\end{proof}

the identity 
\begin{equation*}
    \mathcal{C} \cdot [A] = \begin{pmatrix}
        a_{1,1} C & a_{1,2} C & \dots & a_{1,N} C \\
        a_{2,1} C & a_{2,2} C & \dots & a_{2,N} C \\
        \vdots & \vdots & \ddots & \vdots \\
        a_{N,1} C & a_{N,2} C & \dots & a_{N,N} C
        \end{pmatrix} = [A] \cdot \mathcal{C}.
\end{equation*}

\section{Conventions and $2\times2$ formulation}
\textbf{\large Time evolution and constraints}\\
We look at a solution of the (HWM) in the rational form with simple poles
\[
M(t,x) = M_0 + \sum_{j=1}^N \frac{A_j(t)}{x-x_j(t)} + \sum_{j=1}^N \frac{A_j^*(t)}{x-\bar x_j(t)},~ \im \left(x_j(t)\right) >0.
\]
$M$ solves (HWM) if and only if 
\[
\partial_t M(t,x) = - \frac{\iu}{2} [M(t,x),|\nabla| M(t,x)]
\]
We have for $\partial_t M(t,x)$
\[
\partial_t M(t,x) = \sum_{j=1}^N \left( \frac{A_j'(t)}{x-x_j(t)} + \frac{x_j'(t) A_j(t)}{(x-x_j'(t))^2} \right) + \sum_{j=1}^N \left( \frac{\left(A_j'\right)^*(t)}{x-\bar x_j(t)} + \frac{\bar x_j'(t) A_j(t)}{(x-\bar x_j'(t))^2} \right),
\]
and for $|\nabla| M(t,x)$,
\[
|\nabla| U = -\iu \sum_{j=1}^N \frac{-A_j(t)}{\left(x-x_j(t)\right)^2} - \iu \sum_{j=1}^N \frac{A_j^*(t)}{\left(x-\bar x_j(t)\right)^2}.
\]
We can rewrite the right-hand side as
\[
\begin{aligned}
-\frac{\iu}{2} [M(t,x)|\nabla| M(t,x)] &=\\
\frac{1}{2}\Bigg[ U_\infty + \sum_{k=1}^N \frac{A_k}{x-x_k}&+\sum_{k=1}^N \frac{A_k^*}{x-\bar x_k},~ \sum_{j=1}^N \frac{A_j}{(x-x_j)^2} - \sum_{j=1}^N \frac{A_j^*}{(x-\bar x_j)^2} \Bigg] 
\end{aligned}
\]
\[
 = \frac{1}{2} \sum_{j=1}^N \frac{1}{(x-x_j)^2} [U_\infty,A_j] - \frac{1}{2} \sum_{j=1}^N \frac{1}{(x-\bar x_j)^2} [U_\infty,A_j^*]
\]
\[
+ \frac{1}{2} \sum_{k=1}^N \sum_{j\neq k}^N \frac{1}{(x_k-x_j)^2} \left( \frac{1}{x-x_k} - \frac{1}{x-x_j} \right) [A_k,A_j] + \frac{1}{x_j-x_k} \frac{1}{(x-x_j)^2} [A_k,A_j]
\]
\[
+ \frac{1}{2} \sum_{k=1}^N \sum_{j=1}^N \frac{1}{(\bar x_k-x_j)^2} \left( \frac{1}{x-\bar x_k} - \frac{1}{x-x_j} \right) [A_k^*,A_j] + \frac{1}{ x_j-\bar x_k} \frac{1}{(x-x_j)^2} [A_k^*,A_j]
\]
\[
- \frac{1}{2} \sum_{k=1}^N \sum_{j=1}^N \frac{1}{(x_k-\bar x_j)^2} \left( \frac{1}{x-x_k} - \frac{1}{x-\bar x_j} \right) [A_k,A_j^*] - \frac{1}{\bar x_j- x_k} \frac{1}{(x-\bar x_j)^2} [A_k,A_j^*]
\]
\[
- \frac{1}{2} \sum_{k=1}^N \sum_{j\neq k}^N \frac{1}{(\bar x_k-\bar x_j)^2} \left( \frac{1}{x-\bar x_k} - \frac{1}{x-\bar x_j} \right) [A_k^*,A_j^*] - \frac{1}{\bar x_j- \bar x_k} \frac{1}{(x-\bar x_j)^2} [A_k^*,A_j^*]
\]
Regrouping the term, we obtain
\[
\begin{aligned}
\frac{1}{(x-x_j)^2}:~& \frac{1}{2} \left( [U_\infty,A_j] + \sum_{k\neq j} \left[ \frac{A_k}{x_j-x_k},A_j \right]+\sum_{k=1}^N \left[ \frac{A_k^*}{x_j-\bar x_k},A_j \right]\right)\\
=& \frac{1}{2} \left[ U_\infty + \sum_{k\neq j} \frac{A_k}{x_j-x_k} + \sum_{k=1}^N \frac{A_k^*}{x_j-\bar x_k},~ A_j \right]
\end{aligned}
\]
equaling with the left-hand-side yields
\[
\left(\partial_t x_j \right)A_j = \frac{1}{2} \left[ U_\infty + \sum_{k\neq j} \frac{A_k}{x_j-x_k} + \sum_{k=1}^N \frac{A_k^*}{x_j-\bar x_k},~ A_j \right]= \frac{1}{2}\left[B_j,A_j \right].
\]
Considering now the terms in $(x-x_j)^{-1}$ 
\[
\begin{aligned}
    \frac{1}{(x-x_j)}:~& \frac{1}{2} \left(\sum_{k\neq j} \frac{[A_j,A_k]}{(x_j-x_k)^2} - \sum_{k\neq j} \frac{[A_k,A_j]}{(x_k-x_j)^2} - \sum_{k=1}^N \frac{[A_k^*,A_j]}{(\bar x_k-x_j)^2} - \sum_{k=1}^N \frac{[A_j,A_k^*]}{(x_j-\bar x_k)^2}\right) \\
    &= \sum_{k\neq j} \frac{[A_j,A_k]}{(x_j-x_k)^2}.
\end{aligned}
\]
Since the term in $(x-x_j)^{-1}$ is just $\partial_t A_j$ in the left-hand-side, we get
\[
\partial_t A_j = \sum_{k\neq j} \frac{[A_j,A_k]}{(x_j-x_k)^2}.
\]
In addition, writing $M(t,x)^2=I_{2}$ gives in particular
\[
A_j B_j + B_j A_j = 0.
\]
We then obtain 
\[
\partial_t x_j A_j = B_j A_j = b_j A_j,~ b_j = \partial_t x_j.
\]
\textbf{\large Additional formulas}

\[
\frac{1}{x-x_j} \frac{1}{x-x_k} = \frac{1}{x_j-x_k} \left( \frac{1}{x-x_j} - \frac{1}{x-x_k} \right),
\]

\[
\frac{1}{x-x_j} \frac{1}{(x-x_k)^2} = \frac{1}{(x_j-x_k)^2 (x-x_j)} - \frac{1}{(x_j-x_k)^2 (x-x_k)} - \frac{1}{(x_j-x_k) (x-x_k)^2}.
\]
The condition coming from \cite{matsuno2022integrability}, adapted to our notation, reads

\[
\dot x_j(t) = \frac{s_j\times s_j^*}{s_j \cdot s_j^*} \cdot \iu \left( m_0 + \sum_{k\neq j}^N \frac{s_k}{x_j-x_k} + \sum_{k=1}^N \frac{s_k^*}{x_j-\bar x_k} \right).
\]
Using the two-by-two matrices with our conventions, it becomes

\[
\dot x_j(t) = \frac{\frac{1}{2} Tr\left( \frac{1}{2 \iu} [A_j,A_j^*] \iu B_j \right)}{\frac{1}{2} Tr(A_j A_j^*)} = \frac{1}{2}\frac{ Tr\left(  [A_j,A_j^*]  B_j \right)}{ Tr(A_j A_j^*)}.
\]
If $B_j A_j = b_j A_j$ and $B_j^* A_j* = -\bar b_j A_j^*$, then it becomes

\[
\dot x_j(t) = \frac{1}{2} \frac{ Tr(b_j A_j A_j^*) - Tr(B_j^* A_j^* A_j)^* }{Tr(A_j A_j^*)} =  \frac{b_j Tr(A_j A_j^*)}{Tr(A_j A_j^*)} =  b_j.
\]

\end{document}